\documentclass[11pt]{article}
\usepackage{amsfonts}
\usepackage{lipsum}
\usepackage{latexsym}
\usepackage{amsmath}
\usepackage{amssymb}
\usepackage{amsthm}
\usepackage{graphicx}
\usepackage{dsfont}
\usepackage{fullpage}
\usepackage{enumerate}
\usepackage{color}
\usepackage{bbm}
\newtheorem{theorem}{Theorem}[section]
\newtheorem{lemma}[theorem]{Lemma}

\newtheorem{proposition}[theorem]{Proposition}
\newtheorem{corollary}[theorem]{Corollary}

\newtheorem{claim}[theorem]{Claim}

\theoremstyle{definition}

\newtheorem{example}[theorem]{Example}

\newtheorem{thmy}{Theorem}
\newenvironment{oldtheorem}{\stepcounter{thm}\begin{thmy}}{\end{thmy}}
\newtheorem*{note*}{Note}

\newcommand{\R}{\mathbb R}

\makeatother

\newcommand\blfootnote[1]{%
  \begingroup
  \renewcommand\thefootnote{}\footnote{#1}%
  \addtocounter{footnote}{-1}%
  \endgroup
}

\begin{document}


\title{\bf A strengthening of the Blaschke-Santal\'o inequality for $o$-symmetric planar convex bodies}
\date{\today}
\medskip
\author{K\'aroly J. B\"or\"oczky, Endre Makai, Jr.}
\maketitle
\blfootnote{2020 Mathematics Subject Classification. Primary: 52A20; Secondary: 52A38, 52A40.}
\blfootnote{Keywords. Blaschke-Santal\'o inequality, Ball's conjecture, duality}

\begin{center}
{\it Heartfully dedicated to K\'aroly Bezdek on the occasion of his 70th birthday}
\end{center}

\begin{abstract}
We verify the inequality
$$
\frac{|K|}{|E|}+\frac{|K^*|}{|E^*|}\leq 2
$$
 for any $o$-symmetric convex body $K\subset\R^2$ where $E$ is either the John ellipse of maximal area  contained in $K$ or the minimal area L\"owner ellipse containing $K$. The analogous estimate may not hold if $K$ is a planar but the assumption of $o$-symmetry is dropped, or if $K$ is $o$-symmetric convex body in $\R^n$ for $n\geq 3$. Our new inequality strengthens the Blaschke-Santal\'o inequality for  $o$-symmetric convex bodies $K\subset\R^2$ with an error term of optimal order.
\end{abstract}

\section{Introduction}

For background on the notions in convexity in the note, see Schneider \cite{Sch14}. We consider an inner product $\langle \cdot ,\cdot\rangle$ in $\R^n$, $n\geq 2$, and write $o$ to denote the origin and $\|x\|:=\sqrt{\langle x,x\rangle}$ to denote the Euclidean norm of an  $x\in \R^n$. In addition, let $B^n:=\{x\in \R^n:\|x\|\leq 1\}$ be the Euclidean unit ball   and  $S^{n-1}:=\{x\in\R^n:\|x\|=1\}$ be the unit sphere. The Lebesgue measure of a measurable $X\subset \R^2$ is denoted by $|X|$.  For an $o$-symmetric convex body $K\subset\R^n$, its polar is
$$
K^*=\{x\in\R^n:\langle x,y\rangle\leq 1\;\forall y\in K\},
$$
that satisfies $(B^n)^*=B^n$, $(K^*)^*=K$ and
\begin{equation}
\label{polar-linear-map}
(\Phi K)^*=\Phi^{-t}K^* 
\end{equation}
for any $\Phi\in {\rm GL}(n)$. A classical result in convex geometry is the
 Blaschke-Santal\'o inequality, which reads as follows in the case of an $o$-symmetric convex bodies.

\begin{oldtheorem}[Blaschke-Santal\'o inequality]
\label{thm-old-BS} 
If $K\subset\R^n$ is an $o$-symmetric convex body, then
\begin{equation}
\label{eq-san}
|K|\cdot |K^*|\leq |B^n|^2,
\end{equation}
with equality if and only if $K$ is an ellipsoid. 
\end{oldtheorem}

The $n=2,3$ cases of \eqref{eq-san} were due to Blaschke, and the inequality \eqref{eq-san} for all $n\geq 2$ is proved by Santal\'o \cite{San} in 1949. Actually, the equality case of \eqref{eq-san} was clarified by Petty \cite{Pet85} only in 1985.
Since then, various proofs of the Blaschke-Santal\'o inequality have been  presented, for example, by \cite[Theorem 7.3]{And06},  \cite{Bal86}, \cite{Leh09b}, \cite{Leh09c}, \cite{Lut91}, \cite{LuZ97}, \cite{MeP90}.

It follows from \eqref{polar-linear-map} that the left hand side of \eqref{eq-san} does not change if $K$ is replaced by $\Phi K$ for a linear transform $\Phi\in{\rm GL}(n)$. In particular, one may assume that the unit ball $B^n$ is either the unique so-called John ellipsoid of largest volume contained in $K$, or the unique so-called L\"owner ellipsoid of smallest volume containing $K$ (see John \cite{Joh48}, Ball \cite{Bal92} or Gruber, Schuster \cite{GrS05} about the existence and uniqueness of these ellipsoids). According to the characterization of the contact points due to John \cite{Joh48} (see also Gruber, Schuster \cite{GrS05}), assuming that $B^n\subset K$ for an $o$-symmetric convex body $K\subset\R^n$, $B^n$ is the John ellipsoid of $K$ if and only if there exist $c_1,\ldots,c_k>0$ and an $o$-symmetric set $\{u_1,\ldots,u_k\}\subset S^{n-1}\cap\partial K$ such that
\begin{align}
\label{John-cond}
\sum_{i=1}^kc_i u_i\otimes u_i=&I_n\\
\label{John-card}
2n\leq k\leq & n(n+1) 
\end{align}
where $u_i\otimes u_i$ is the rank one $n\times n$ matrix $u_iu_i^t$. Similarly,  assuming that assuming that $C\subset B^n$ for an $o$-symmetric convex body $C\subset\R^n$, $B^n$ is the L\"owner ellipsoid of $C$ if and only if there exist $c_1,\ldots,c_k>0$ and an $o$-symmetric set $u_1,\ldots,u_k\in S^{n-1}\cap\partial K$ such that \eqref{John-cond} and \eqref{John-card} hold. It follows via \eqref{polar-linear-map} that for any $o$-symmetric convex body $K\subset\R^n$ and $o$-symmetric ellipsoid $E\subset\R^n$
\begin{equation}
\label{John-Lowner-polarity}
\mbox{$E$ is the John ellipsoid of $K$ if and only if $E^*$ is the L\"owner ellipsoid of $K^*$.}
\end{equation}
These results about the John and L\"owner ellipsoids  were implicit in Behrend \cite{Beh37} from 1937 in the planar case $n=2$.

Our main goal is to prove the following strengthening of the Blaschke-Santal\'o inequality in the planar case. 

\begin{theorem}
\label{thm-volume-sum}
If $K\subset\R^2$ is an $o$-symmetric convex body, such that either  its John ellipse  or its  L\"owner ellipse is the Euclidean unit ball $B^2$, then
\begin{equation}
\label{eq-volume-sum}
    |K|+ |K^*|\leq 2\pi,
\end{equation}
and equality holds if $K=B^2$.
\end{theorem}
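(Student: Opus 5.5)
By the polarity relation \eqref{John-Lowner-polarity}, the two hypotheses are interchanged when $K$ is replaced by $K^*$, and the left-hand side of \eqref{eq-volume-sum} is symmetric in $K$ and $K^*$. So I would only treat the case where $B^2$ is the John ellipse of $K$. Then $B^2\subset K$, so the support function satisfies $h=h_K\ge 1$. The equality case $K=B^2$ is immediate.

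\textbf{Step 1: the contact angles.} Write the contact points as $u_i=(\cos\theta_i,\sin\theta_i)$. At each $\theta_i$ we have $h(\theta_i)=1$. I would first show that \eqref{John-cond} forces consecutive contact angles to differ by at most $\pi/2$. Suppose instead that some open arc of $S^1$ of length greater than $\pi/2$ contains no $u_i$. By $o$-symmetry the antipodal arc is also empty, so all $u_i$ lie in two antipodal closed arcs, each of length less than $\pi/2$. Let $w$ be the unit vector at the midpoint of one of the empty arcs. Then every $u_i$ makes an angle greater than $\pi/4$ with $\pm w$, so $\langle u_i,w\rangle^2<\tfrac12$. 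Let $w^\perp$ be the perpendicular unit vector; it is the midpoint of an arc containing all the $u_i$ (up to sign), so $\langle u_i,w^\perp\rangle^2>\tfrac12$ for every $i$. Applying \eqref{John-cond} to $w$ and to $w^\perp$ gives $1=\sum c_i\langle u_i,w\rangle^2<\tfrac12\sum c_i$ and $1=\sum c_i\langle u_i,w^\perp\rangle^2>\tfrac12\sum c_i$, a contradiction. Hence the contact angles cut the circle into intervals $I_j$ of length $L_j\le\pi/2$, with $h=1$ at every endpoint.

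\textbf{Step 2: area formulas.} Next I would write both areas in terms of $h$. For $|K|$, use $|K|=\frac12\int_0^{2\pi}(h^2-h'^2)\,d\theta$, which holds for every planar convex body since $h$ is Lipschitz; one can approximate by smooth bodies if needed. For $|K^*|$, use $|K^*|=\frac12\int_0^{2\pi}h^{-2}\,d\theta$, since the radial function of $K^*$ is $1/h_K$. Then \eqref{eq-volume-sum} becomes
$$
\int_0^{2\pi}\bigl(h^2+h^{-2}-2\bigr)\,d\theta\le\int_0^{2\pi}h'^2\,d\theta .
$$
Since $h^2+h^{-2}-2=(h-h^{-1})^2$, it suffices to prove $\int_{I_j}(h-h^{-1})^2\le\int_{I_j}h'^2$ on each interval $I_j$ separately.

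\textbf{Step 3: the key estimate.} Put $f=h-1$ on $I_j$. Then $f\ge0$, $f$ is Lipschitz, and $f$ vanishes at both endpoints of $I_j$. The Dirichlet Wirtinger inequality gives
$$
\int_{I_j}f'^2\ge\Bigl(\frac{\pi}{L_j}\Bigr)^2\int_{I_j}f^2\ge 4\int_{I_j}f^2,
$$
using $L_j\le\pi/2$ from Step 1. For the pointwise comparison, $h\ge1$ gives $(h+1)/h\le2$. Hence
$$
(h-h^{-1})^2=(h-1)^2\Bigl(\frac{h+1}{h}\Bigr)^2\le4(h-1)^2=4f^2 .
$$
Combining the two displays gives $\int_{I_j}(h-h^{-1})^2\le\int_{I_j}h'^2$. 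Summing over $j$ yields \eqref{eq-volume-sum}.

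I expect the main obstacle to be Step 1: getting a clean statement of the planar John condition, namely that all gaps between consecutive contact angles are at most $\pi/2$. This bound is exactly what makes the Wirtinger constant $(\pi/L_j)^2$ at least $4$, and it is sharp, as the square with inscribed disc shows. The remaining concern is routine regularity: the area formula for $|K|$ for non-smooth $K$, and the use of $h\ge1$ with $h(\theta_i)=1$ as Dirichlet data. Both should follow by approximation, since Wirtinger only needs $f\in W^{1,2}_0(I_j)$.
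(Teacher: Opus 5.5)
Your proof is correct, and it follows a genuinely different and much shorter route than the paper.

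\textbf{What is shared.} Both arguments split at the contact points $u_i$, whose consecutive angular gaps are at most $\pi/2$. The paper cites Behrend for this. You derive it directly from \eqref{John-cond}, which is fine; an empty arc of length $\geq\pi$ is excluded even more trivially.

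\textbf{What the paper does next.} It proves the sectorwise estimate $|M|+|M^\circ|\leq\angle(p,o,q)$ of Proposition~\ref{BS-inC} by an extremal argument:
\begin{enumerate}
\item Steiner symmetrization about the bisector (Claim~\ref{Steiner-symm-inC});
\item maximizers among symmetric polygons with a bounded number of proper vertices;
\item the vertex-moving Lemma~\ref{areasum-polygon-variation}, which forces the vertices of these maximizers to be equally spaced on an ellipse;
\item a limit as $m\to\infty$;
\item a one-variable computation for ellipse sectors.
\end{enumerate}

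\textbf{What you do instead.} You write both areas through the support function, so the claim becomes $\int(h-h^{-1})^2\,d\theta\leq\int h'^2\,d\theta$. You then close it on each gap with the Dirichlet Wirtinger inequality, whose constant is $(\pi/L_j)^2\geq 4$, together with the pointwise bound $(1+h^{-1})^2\leq 4$.

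\textbf{What your route buys.} It avoids polygon approximation, compactness and symmetrization entirely, and it gives more than the paper states:
\begin{itemize}
\item Since $(1+h^{-1})^2<4$ wherever $h>1$, equality forces $h\equiv 1$. So equality holds \emph{only} for $K=B^2$, whereas the paper asserts only the ``if'' direction.
\item The discarded terms $\frac12\sum_j\int_{I_j}\bigl(h'^2-4(h-1)^2\bigr)\,d\theta$ and $\frac12\int(h-1)^2\bigl(4-(1+h^{-1})^2\bigr)\,d\theta$ are both nonnegative. Together they give an explicit formula for the deficit $2\pi-|K|-|K^*|$.
\end{itemize}

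\textbf{What the paper's route buys.} Its local statement is more general. Proposition~\ref{BS-inC} holds for every $M\in\mathcal{K}_{p,q}$, including bodies that do not contain the circular sector, such as $M=[o,p,q]$, where $h<1$ occurs. Your pointwise step uses $h\geq 1$ essentially. So your argument proves the theorem, where $B^2\subset K$ holds after your polarity reduction, but it would not by itself give Proposition~\ref{BS-inC} in that generality.
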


We note that Theorem~\ref{thm-volume-sum} does not hold if $K$ is the regular triangle incribed into $B^2$. In this case, $B^2$ is the  minimal volume L\"owner ellipse by the uniquenes and the linear invariance of the L\"owner ellipse (see  John \cite{Joh48} or Gruber, Schuster \cite{GrS05}), and also $B^2$ is the John ellipse of the circumscibed regular triangle $K^*$. It follows that
$|K|+ |K^*|=6.49\ldots> 2\pi$.
On the other hand, no analogue of Theorem~\ref{thm-volume-sum} holds in higher dimensions, as if $n\geq 3$ and $K=[-1,1]^n$, then $|K|+ |K^*|>2|B^n|$. For $n=3$, this can be seen by direct calculations, and if $n\geq 4$, then already $|K|\geq 16>2|B^n|$.

Combining Theorem~\ref{thm-volume-sum}, \eqref{polar-linear-map} and \eqref{John-Lowner-polarity}, we deduce the following result that holds for any $o$-symmetric convex domain.

\begin{corollary}
\label{thm-volume-sum-gen}
If $K\subset\R^2$ is an $o$-symmetric convex body, and $E$ is either  its John ellipse  or its  L\"owner ellipse, then
\begin{equation}
\label{eq-volume-sum-gen}
    \frac{|K|}{2|E|}+ \frac{|K^*|}{2|E^*|}\leq 1.
\end{equation}
\end{corollary}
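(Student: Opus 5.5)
The plan is to reduce Corollary~\ref{thm-volume-sum-gen} to Theorem~\ref{thm-volume-sum} by a linear change of variables, exactly as the sentence preceding the statement suggests. Let $E$ be the John ellipse of $K$; it is $o$-symmetric because $K$ is $o$-symmetric and the John ellipse is unique. Choose $\Phi\in{\rm GL}(2)$ with $\Phi E=B^2$ and set $K'=\Phi K$. By the linear invariance and uniqueness of the John ellipse, $B^2$ is the John ellipse of $K'$. Theorem~\ref{thm-volume-sum} then gives $|K'|+|K'^*|\leq 2\pi$.

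Next I will translate this back. We have $|K'|=|\det\Phi|\,|K|$ and $\pi=|B^2|=|\det\Phi|\,|E|$, so $|K'|/\pi=|K|/|E|$. By \eqref{polar-linear-map}, $K'^*=\Phi^{-t}K^*$ and $B^2=(B^2)^*=(\Phi E)^*=\Phi^{-t}E^*$. Hence $|K'^*|=|\det\Phi|^{-1}|K^*|$ and $\pi=|\det\Phi|^{-1}|E^*|$, which gives $|K'^*|/\pi=|K^*|/|E^*|$. Dividing $|K'|+|K'^*|\leq 2\pi$ by $2\pi$ yields \eqref{eq-volume-sum-gen}.

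The L\"owner case is identical: $\Phi E=B^2$ is the L\"owner ellipse of $\Phi K$ by linear invariance, and Theorem~\ref{thm-volume-sum} also covers this hypothesis. Alternatively, \eqref{John-Lowner-polarity} lets one swap the roles of $K$ and $K^*$, since \eqref{eq-volume-sum-gen} is symmetric under $K\leftrightarrow K^*$, $E\leftrightarrow E^*$.

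No step is a real obstacle. The only points needing care are that $E$ is $o$-symmetric, so its polar makes sense and is an ellipse, and the bookkeeping of the determinant factors, which cancel in each ratio.
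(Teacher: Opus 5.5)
Your proposal is correct and is essentially the paper's own argument: the paper derives the corollary in one line by normalizing $E$ to $B^2$ with a linear map and combining Theorem~\ref{thm-volume-sum} with \eqref{polar-linear-map} and \eqref{John-Lowner-polarity}. Your determinant bookkeeping, showing that $|K|/|E|$ and $|K^*|/|E^*|$ are ${\rm GL}(2)$-invariant, supplies exactly the details the paper leaves implicit.
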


As a reverse inequality to Theorem~\ref{thm-volume-sum}, if a convex set $K\subset \R^2$  is either contains the unit disk $B^2$, or is contained in $B^2$ and contains the origin in their interior, then Florian \cite{Flo95,Flo96}  showed that 
\begin{equation}
\label{volume-sum-lower}
    |K|+ |K^*|\geq 6,
\end{equation}
with equality if and only if $K$ is a square circumscribed around or inscribed into $B^2$.

As $|E|\cdot|E^*|=\pi^2$ by \eqref{polar-linear-map}, Corollary~\ref{thm-volume-sum-gen} yields the planar Blaschke-Santal\'o inequality \eqref{eq-san} by the AM-GM inequality between the arithmetic and geometric mean. In addition, we deduce the  following stability version of the planar Blaschke-Santal\'o inequality. We note that according to Behrend \cite{Beh37}, p. 726, if  $E_J\subset K$ is the John ellipse  and $E_L\supset K$ is the L\"owner ellipse of the $o$-symmetric convex body $K\subset\R^2$, then  
\begin{equation}
\label{Behrend}
 1\geq \frac{|E_J|}{|K|}\geq \frac {\pi}4 \mbox{ \ and \ } 1\leq \frac{|E_L|}{|K|}\leq \frac{\pi}2
\end{equation}
where equality holds in the inequalities on the left for ellipses, and on the right for parallelograms (see Ball \cite{Bal89} for an extension of \eqref{Behrend} to any dimension). Stability versions of the Blaschke Santal\'o inequality in terms of the Banach-Mazur distance (that is weaker than the symmetric difference metric) have been obtained by Ball, B\"or\"oczky \cite{BaB11}, B\"or\"oczky \cite{Bor10} and Ivaki \cite{Iva15}.

\begin{theorem}
\label{BS-stab}
Let $K\subset\R^2$ be an $o$-symmetric convex body with John ellipse $E_J\subset K$ and  L\"owner ellipse $E_L\supset K$.
If 
$$
|K|\cdot  |K^*|\geq (1-\varepsilon)\pi^2
$$
for $\varepsilon\in[0,\frac12)$, then
\begin{equation}
\label{eq-BS-stab}
    |K\backslash E_J|\leq 4 |K|\sqrt{\varepsilon}\mbox{ and }|E_L\backslash K|\leq 5|K|\sqrt{\varepsilon}.
\end{equation}
\end{theorem}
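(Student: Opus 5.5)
By linear invariance of all quantities involved (the ratios $|K\backslash E_J|/|K|$ and $|E_L\backslash K|/|K|$, and the product $|K|\cdot|K^*|$ by \eqref{polar-linear-map}), I would first normalize so that $E_J=B^2$ for the first estimate, and separately so that $E_L=B^2$ for the second. The whole argument then rests on Theorem~\ref{thm-volume-sum} combined with the product hypothesis.

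Consider first $E_J=B^2$, and write $a=|K|/\pi\geq 1$ and $b=|K^*|/\pi$. Theorem~\ref{thm-volume-sum} gives $a+b\leq 2$, and the hypothesis gives $ab\geq 1-\varepsilon$. Hence
$$a(2-a)\geq ab\geq 1-\varepsilon,$$
that is, $(a-1)^2\leq\varepsilon$, so $a\leq 1+\sqrt{\varepsilon}$. Since $B^2\subset K$,
$$|K\backslash E_J|=|K|-\pi=\pi(a-1)\leq \pi\sqrt{\varepsilon}.$$
Because $|K|\geq \pi$, this is at most $|K|\sqrt\varepsilon\leq 4|K|\sqrt\varepsilon$. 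The constant $4$ might instead come from a cruder chain using \eqref{Behrend}, but either route suffices.

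Now consider $E_L=B^2$, so $K\subset B^2$ and $a\leq 1$. The same computation, now giving $(1-a)^2\leq\varepsilon$, yields $a\geq 1-\sqrt\varepsilon$. Therefore
$$|E_L\backslash K|=\pi(1-a)\leq\pi\sqrt\varepsilon.$$
To compare with $|K|$, use \eqref{Behrend}: $|K|\geq \frac{2}{\pi}|E_L|=2$, so $\pi\sqrt\varepsilon\leq \frac{\pi}{2}|K|\sqrt\varepsilon<5|K|\sqrt\varepsilon$. Alternatively, $|K|\geq\pi(1-\sqrt{1/2})$ because $\varepsilon<\frac12$, which gives $\pi\leq 3.5|K|$, still below $5$.

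The main point requiring care is that Theorem~\ref{thm-volume-sum} applies in each normalization: its hypothesis allows either the John or the Löwner ellipse to be $B^2$, so both cases are covered directly. Beyond that, the only subtlety is that the set differences reduce to area differences because $E_J\subset K\subset E_L$. No genuine obstacle is expected; the content of the result lies entirely in Theorem~\ref{thm-volume-sum}.
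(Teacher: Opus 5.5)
Your proof is correct and follows the paper's strategy: normalize the relevant ellipse to $B^2$, combine Theorem~\ref{thm-volume-sum} with the product hypothesis, and use $E_J\subset K\subset E_L$ to turn set differences into area differences. The only difference is the algebra: the paper goes through AM-GM, $(\sqrt{|K|}-\sqrt{|K^*|})^2<2\pi\varepsilon$, $|K^*|\le\pi$ (resp.\ $|K^*|\ge\pi$) and Behrend's bound $|K|\le 4$, whereas your substitution of $b\le 2-a$ into $ab\ge 1-\varepsilon$ gives $(a-1)^2\le\varepsilon$ directly, which is shorter, covers $\varepsilon=0$, and yields the sharper bounds $|K\backslash E_J|\le |K|\sqrt{\varepsilon}$ and $|E_L\backslash K|\le \frac{\pi}{2}|K|\sqrt{\varepsilon}$.
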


\begin{example}[The order $\sqrt{\varepsilon}$ of the error term in \eqref{eq-BS-stab} is optimal]\mbox{ }\\
\label{BS-stab-opt}
For $p>1$, we consider the $L_p$ ball
$$
B^2_p=\{(x,y)\in\R^2:|x|^p+|y|^p\leq 1\},
$$
and hence $B^2_2=B^2$. We recall that according to Wang \cite{Wan05}, we have
\begin{equation}
\label{b2p-vol}
|B^2_p|=\frac{4\cdot \Gamma(1+\frac1p)^2}{\Gamma(1+\frac{2}p)}
\end{equation}
where $\Gamma(\cdot)$ is Euler's Gamma function.
If $|t|$ is small, then let $K_t=B^2_p$ for  $p=2+t$, and hence \eqref{John-cond} yields that the John ellipsoid of $K_t$ is $B^2$ if $t\geq 0$, and the L\"owner ellipsoid of $K_t$ is $B^2$ if $t\leq 0$. In addition, $K^*_t=B^2_q$ holds for $q=(2+t)/(1+t)$.

We observe that
\begin{align}
\label{Kt-voldiff-tpos}
|K_t\backslash B^2|\geq &c_1 t&&\mbox{if }t\geq 0,\\
\label{Kt-voldiff-tneg}
|B^2\backslash K_t|\geq &c_1 t&&\mbox{if }t\leq 0
\end{align}
for an absolute constant $c_1>0$. 
For the $C^2$ function $f(t)=|K_t|\cdot |K_t^*|$, combining $K_0=B^2$ and the Blaschke-Santal\'o inequality \eqref{eq-san} implies that $f$ attains its maximum at $t=0$, and hence $f'(0)=0$, and, in turn, the Taylor formula yields that
\begin{equation}
\label{f-near-zero}
f(t)\geq f(0)-c_2t^2
\end{equation}
for an absolute constant $c_2>0$. We conclude  from \eqref{Kt-voldiff-tpos} and \eqref{f-near-zero} that if $\varepsilon>0$ is small, and we choose $t=\pm\sqrt{\varepsilon/c_2}$, then 
$$
|K_t|\cdot |K_t^*|\geq (1-\varepsilon)\pi^2,
$$
while $|K_t\backslash B^2|\geq \frac{c_1}{\sqrt{c_2}}\cdot\sqrt{\varepsilon}$ if $t>0$, and $|B^2\backslash K_t|\geq \frac{c_1}{\sqrt{c_2}}\cdot\sqrt{\varepsilon}$ if $t<0$. This completes the proof of Example~\ref{BS-stab-opt}.
\end{example}

\section{Area sum within a convex cone}
\label{secSector}

For $X_1,\ldots,X_m\subset\R^2$, we write $[X_1,\ldots,X_m]$ to denote the convex hull of $X_1\cup\ldots\cup X_m$; namely, the smallest convex set containing $X_1,\ldots,X_m$. Here $[X_1,\ldots,X_m]$ is compact if $X_1,\ldots,X_m$ are compact. For $x\neq y\in\R^2$, we write ${\rm aff}\{x,y\}$ to denote the line passing through $x,y$, and write
$$
(x,y)=[x,y]\backslash\{x,y\}
$$
to denote the open segment. If $K,M\subset \R^2$ are convex bodies, then their Hausdorff distance is
$$
\delta_H(K,M)=\min_{\varrho\geq 0}\{K\subset M+\varrho B^2\mbox{ and }M\subset K+\varrho B^2\}.
$$
It follows that if $M_n\subset \R^2$ is a sequence of convex bodies, then
\begin{equation}
\label{limit}
\lim_{n\to\infty}\delta_H(M_n,M)=0\mbox{ \ if and only if \ }\lim_{n\to\infty}|M_n\Delta M|=0.
\end{equation}
In this case, we say that $M_n$ tends to $M$ with respect to the Hausdorff metric. According to the Blaschke Selection theorem, if 
for fixed $R>r>0$, $M_n\subset RB^2$ is a sequence of convex bodies  such that  $x_n+rB^n\subset M_n$ for some $x_n\in M_n$, then there exists a subsequence $\{M_{n'}\}$ tending to some convex body with respect to the Hausdorff metric.
One of our tools in this section is Steiner symmetrization.
For a convex body $M\subset \R^2$, and
a line $\ell$ through $o$, translate every secant of $M$
orthogonal to $l$ in its affine hull in a way
such that the midpoint of the translated image lies on $\ell$.
The closure of the union of these translates
is the Steiner symmetral $M'$ with respect to $\ell$. 
Readily $M'$ is a convex body with $|M'|=|M|$.
According to Keith Ball's PhD thesis \cite{Bal86} (see also Meyer, Pajor \cite{MeP90}), if $K\subset \R^2$ is an $o$-symmetric planar convex body and $K'$ is the Steiner symmetral of $K$ with respect to $\ell$, then
\begin{equation}
\label{Steiner-symm}
|K'^*|\geq |K^*|.
\end{equation}

Polarity is frequently considered for a $z\in\R^2\backslash\{o\}$ or a line $\ell\subset \R^2\backslash\{o\}$, and is defined by
\begin{align*}
z^*=&\{x\in \R^2:\langle x,z\rangle=1\}\\
\ell^*=&\{x\in \R^2:\langle x,y\rangle=1\mbox{ for any }y\in\ell\}.
\end{align*}
It follows that $z^*$ is a line in $\R^2\backslash\{o\}$ such that $z$ is a normal vector and  $(z^*)^*=z$, and $\ell^*$ is a normal vector to $\ell$ satisfying $(\ell^*)^*=\ell$. In addition, if $K\subset\R^2$ is an $o$-symmetric convex body and $\Phi\in{\rm GL}(2)$, then
\begin{equation}
\label{point-line-polarity}
\begin{array}{rl}
z\in\ell&\mbox{ if and only if \ }\ell^*\in z^*,\\
(\Phi z)^*=\Phi^{-t}z^*&\mbox{ and \ }(\Phi \ell)^*=\Phi^{-t}\ell^*,\\
z\in\partial K&\mbox{ if and only if \ }z^*\mbox{ is a supporting line to }K^*.
\end{array}
\end{equation}

Next, we set up the notation used in Proposition~\ref{BS-inC}. For fixed $p,q\in S^1$ with $p\neq \pm q$, let $r$ be the intersection of the tangent lines of $B^2$ at $p$ and $q$. We consider the deltoid $C_{p,q}=[o,p,r,q]$ and the convex cone $\sigma=\{tp+sq:s,t\geq 0\}$, and the family
$$
\mathcal{K}_{p,q}=\{M\subset C_{p,q}: M\mbox{ is a convex body and }o,p,q\in M\}.
$$
To any $M\in\mathcal{K}_{p,q}$, we associate the $o$-symmetric convex body $K_M=[M,-M]$, and the "relative polar"
$$
M^\circ=\{x\in \sigma:\langle x,y\rangle\leq 1\mbox{ for any }y\in M\}\subset C_{p,q},
$$
and hence \eqref{point-line-polarity} yields that
\begin{equation}
\label{relative-polar}
M^\circ=K_M^*\cap \sigma\in \mathcal{K}_{p,q} \mbox{ \ and \ } (M^\circ)^\circ=M.
\end{equation}
In addition, if $M_n\in\mathcal{K}_{p,q}$ is a sequence of convex bodies tending to a convex body $M\subset \R^n$ with respect to the Hausdorff metric as $n$ tends to infinity, then $M\in\mathcal{K}_{p,q}$ and
\begin{equation}
\label{relative-polar-limit}
\lim_{n\to\infty}M_n^\circ=M^\circ\mbox{ \ and \ }\lim_{n\to\infty}|M_n^\circ|=|M^\circ|.
\end{equation}

\begin{proposition} 
\label{BS-inC}
For fixed $p,q\in S^1$ with $0<\angle(p,o,q)\leq \frac{\pi}2$, if $M\in\mathcal{K}_{p,q}$, then
$$
|M|+|M^\circ|\leq \angle(p,o,q).
$$
\end{proposition}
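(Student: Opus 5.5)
The plan is to first reduce to extremal configurations by compactness, and then verify the inequality on those configurations by an explicit computation in polar coordinates. Write $\alpha=\angle(p,o,q)$. Equality holds for the circular sector $S=B^2\cap\sigma$: we have $S^\circ=S$ by \eqref{relative-polar}, and $|S|=\alpha/2$. The family $\mathcal{K}_{p,q}$ is compact with respect to the Hausdorff metric, by the Blaschke Selection theorem, since every member lies in $C_{p,q}$ and contains the nondegenerate triangle $[o,p,q]$. Moreover $M\mapsto |M|+|M^\circ|$ is continuous by \eqref{limit} and \eqref{relative-polar-limit}. Hence a maximizer $M_0\in\mathcal{K}_{p,q}$ exists, and by \eqref{relative-polar} so does its relative polar. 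It suffices to show $|M_0|+|M_0^\circ|\le\alpha$. Equivalently, by approximation, it suffices to prove the inequality for polygons $M\in\mathcal{K}_{p,q}$.

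For the computation, parametrize directions $u(\theta)$ in $\sigma$ by $\theta\in[0,\alpha]$. Let $\rho$ be the radial function of $M$ and $h$ its support function. Since $\alpha\le\pi/2$ and $M\subset\sigma$, we have $h_M(-u)\le 0$ for $u\in\sigma$. Hence $h_{K_M}=h_M$ on $\sigma$, and the radial function of $M^\circ=K_M^*\cap\sigma$ is $1/h$. Therefore
\[
|M|+|M^\circ|=\frac12\int_0^\alpha\Big(\rho(\theta)^2+h(\theta)^{-2}\Big)\,d\theta .
\]
The hypotheses enter as follows. The condition $p,q\in M\subset C_{p,q}$ gives $\rho(0)=\rho(\alpha)=1=h(0)=h(\alpha)$, and it forces $h\ge 1$ near the ends. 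The condition $\alpha\le\pi/2$ is needed for the identification $h_{K_M}=h_M$; the bound fails without it, as the triangle example in the introduction shows.

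Since the integrand is not pointwise bounded by $2$, I would argue on polygons by local moves. For a polygon $M=[o,p=v_0,v_1,\dots,v_m=q]$, the polar $M^\circ$ is again a polygon. Its vertices are the polar points of the edge lines ${\rm aff}\{v_{i-1},v_i\}$, and its edges lie on the lines $v_i^*$; this follows from \eqref{point-line-polarity}. Split $\sigma$ into the thin cones between the direction of each $v_i$ and the normal directions of the two adjacent edges. In such a cone with opening angle $\beta$, let $d$ be the distance of the corresponding edge line from $o$. The contribution of that cone is
\[
\tfrac12\tan\beta\,\Big(d^2+\frac{\cos^2\beta}{d^2}\Big).
\]

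The idea is to show that a maximizer cannot have a vertex $v_i$ off the unit circle. Choose a cut point $z=v_i/|v_i|$ or a tangency point on $S^1$, and split $\sigma$ into two subcones $\sigma_1,\sigma_2$ with angles $\alpha_1+\alpha_2=\alpha$. Then replace $M$ by a body that lies in $\mathcal{K}_{p,z}\cup\mathcal{K}_{z,q}$, which lets us induct on the number of vertices. The base case is the quadrilateral $M=[o,p,x,q]$ with $x$ in the triangle $[p,r,q]$. There $M^\circ=[o,p,x',q]$, where $x'$ is explicit, and the inequality becomes a two-variable trigonometric inequality that can be checked directly. The main obstacle is justifying the splitting step: $M\cap\sigma_j$ need not lie in the deltoid $C_{p,z}$ or $C_{z,q}$, so one cannot simply cut. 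To handle this, I would first try a first-variation argument at the maximizer. Moving a vertex $v_i$ radially toward $S^1$ and computing the derivative of the sum of the cone contributions above should show that the maximizer has all vertices on $S^1$ and all edges tangent to $S^1$ in the limit, which forces $M_0=S$.
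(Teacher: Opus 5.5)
\textbf{There is a genuine gap: the core inequality is never proved, and the mechanism you propose for it is false.}

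Your preliminaries are correct. $S^\circ=S$ gives equality, $\mathcal{K}_{p,q}$ is compact, $M\mapsto|M|+|M^\circ|$ is continuous, and $|M|+|M^\circ|=\frac12\int_0^\alpha(\rho^2+h^{-2})\,d\theta$ holds. But none of this touches the inequality. You rightly abandon the splitting step, since in general there is no $z\in\partial M\cap S^1$ with supporting line $z^*$ at which to cut. After that, everything rests on the last paragraph, which is only asserted (``should show'').

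The asserted mechanism fails: maximizing polygons do not have their vertices on $S^1$. Take $\alpha=\pi/2$, $p=(1,0)$, $q=(0,1)$ and $M=[o,p,v,q]$ with $v=\lambda(1,1)/\sqrt2$, and put $s=\sqrt2/\lambda\in[1,2]$. Then $M^\circ$ is the unit square minus the corner cut off by $x_1+x_2=s$. This is a pentagon, not $[o,p,x',q]$ as in your base case. One gets
\[
|M|+|M^\circ|=\frac1s+1-\frac12(2-s)^2 .
\]
This is maximal at $s=(1+\sqrt5)/2$, i.e.\ at $|v|\approx 0.874<1$.

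The same happens in general. If a free vertex $v_i$ and both its neighbours lie on $S^1$, with central angles $\theta_i,\theta_{i+1}$, pulling $v_i$ radially inward changes the sum at rate
\[
\tan\tfrac{\theta_i}2+\tan\tfrac{\theta_{i+1}}2-\tfrac12(\sin\theta_i+\sin\theta_{i+1})>0 .
\]
So such a polygon is never even locally maximal. Radial moves trade $|M|$ against $|M^\circ|$ with no definite sign; for a dilation, $\lambda\mapsto\lambda^2|M|+\lambda^{-2}|M^\circ|$ is actually convex. Moreover, interior first-order conditions are covariant under area-preserving linear maps. If a circular arc satisfies them, so does every arc of an $o$-centred ellipse of area $\pi$, so they cannot single out the circle; only the constraints $p,q\in M\subset C_{p,q}$ can.

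There are two further problems with the setup. The global maximizer is not a polygon, so ``moving a vertex at the maximizer'' is undefined. A maximizer among polygons with boundedly many vertices needs a class closed under limits and under polarity, since both change vertex counts and contacts with $\partial C_{p,q}$. A rigorous variational route would lead to $h^3(h+h'')=1$ on free arcs, whose solutions are exactly such ellipses. It would then also need regularity of the maximizer and an analysis of its contact with $[p,r]\cup[r,q]$. None of this is in your plan.

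Here is what the paper does at these points.
\begin{itemize}
\item It reduces to $\alpha=\pi/2$ by adjoining a circular sector (Claim~\ref{pq-orthogonal}).
\item It reduces to bodies symmetric about $\ell_{p,q}$ via Steiner symmetrization and Ball's inequality \eqref{Steiner-symm} (Claim~\ref{Steiner-symm-inC}).
\item It takes extremal polygons $Q_{(m)}$ in $\widetilde{\mathcal{P}}^{(m)}_A\cup\widetilde{\mathcal{P}}^{(m+1)}_B$, a class essentially closed under limits and invariant under polarity (type A $\leftrightarrow$ type B).
\item Instead of radial moves it uses area-preserving moves: a vertex slides parallel to the chord of its neighbours. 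By Claim~\ref{minimal-area-cut}, the polar area then strictly increases unless $o$, $x_i$ and $(x_{i-1}+x_{i+1})/2$ are collinear (Lemma~\ref{areasum-polygon-variation}).
\item Applied to both $Q_{(m)}$ and $Q_{(m)}^\circ$, this places the proper vertices affinely equally spaced on an $o$-centred ellipse through $p,q$; a limit ellipse $E$ follows.
\item The circle is singled out only in Step~3, by an explicit computation of $|E\cap C_{p,q}|+|(E\cap C_{p,q})^\circ|$ over the one-parameter family of such ellipses. This is where the boundary constraints enter, and nothing in your plan plays that role.
\end{itemize}

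Two smaller slips. ``$h\ge 1$ near the ends'' is false: for $M=[o,p,q]$ one has $h(\theta)=\max(\cos\theta,\cos(\alpha-\theta))<1$ inside. Your cone formula also assumes that the foot of the perpendicular from $o$ to each edge line lies on that edge.
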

  
We prove some statements to prepare the proof Proposition~\ref{BS-inC}. For $p,q\in S^1$ with $0<\angle(p,o,q)\leq \frac{\pi}2$, let $\ell_{p,q}$ be the line passing through $o$ and $\frac{p+q}2$, and hence $C_{p,q}$ is symmetric through $\ell_{p,q}$. We observe that $C_{p,q}^\circ=[0,p,q]$.
We classify an element  $M\in \mathcal{K}_{p,q}$  symmetric through $\ell_{p,q}$ into two types; namely, $M$ is
\begin{description}
\item[type A,]  if $[p,r]\cap M$ (and hence $[q,r]\cap M$, as well) is a segment;
\item[type B,]  if  $[p,r]\cap M=\{p\}$ and $[q,r]\cap M=\{q\}$.
\end{description}
We observe that
\begin{equation}
\label{typea-polar-typeB}
\mbox{if $M\in \mathcal{K}_{p,q}$ is symmetric through $\ell_{p,q}$ and is of type A, then $M^\circ$ is of type B.}
\end{equation}

\begin{claim}
\label{Steiner-symm-inC}
If $M\in \mathcal{K}_{p,q}$, and $M'$ is the Steiner symmetral of $M$ with respect to $\ell_{p,q}$, then $M'\in \mathcal{K}_{p,q}$ and
\begin{equation}
\label{Steiner-symm-inC-eq}
|M'^\circ|\geq |M^\circ|.
\end{equation}
\end{claim}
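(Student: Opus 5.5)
The plan is to adapt the Meyer--Pajor proof of \eqref{Steiner-symm} to the cone $\sigma$. The first step, $M'\in\mathcal{K}_{p,q}$, should be routine. Since $C_{p,q}$ is symmetric through $\ell_{p,q}$, every chord of $C_{p,q}$ orthogonal to $\ell_{p,q}$ is centred on $\ell_{p,q}$. A chord of $M$ lies inside the corresponding chord of $C_{p,q}$, so its translate centred on $\ell_{p,q}$ still lies in $C_{p,q}$; hence $M'\subset C_{p,q}$. The point $o$ is the vertex of $C_{p,q}$ on $\ell_{p,q}$, so $o\in M'$. The points $p$ and $q$ lie on one chord orthogonal to $\ell_{p,q}$, namely $[p,q]$, which is the full chord of $C_{p,q}$ at that height. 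As $M\ni p,q$, the chord of $M$ at that height is $[p,q]$, and it is already centred, so $p,q\in M'$.

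For \eqref{Steiner-symm-inC-eq}, choose coordinates with $\ell_{p,q}$ as the $t$-axis and $s$ orthogonal to it. Write $M=\{(s,t):a(t)\le s\le b(t),\ t\in[0,h]\}$, so $M'=\{(s,t):|s|\le \frac{b(t)-a(t)}2\}$. For $x=(u,v)\in\sigma$, the definition gives $x\in M^\circ$ if and only if $ub(t)+vt\le 1$ and $ua(t)+vt\le1$ for all $t$, while $x\in M'^\circ$ if and only if $|u|\frac{b(t)-a(t)}2+vt\le 1$ for all $t$. I will slice $M^\circ$ and $M'^\circ$ by lines parallel to the $u$-axis. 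The goal is a Brunn--Minkowski-type inclusion: whenever $(u_1,v_1),(u_2,v_2)\in M^\circ$, the point $\bigl(\frac{u_1-u_2}2,\frac{v_1+v_2}2\bigr)$ lies in $M'^\circ$.

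The pairing must be chosen so that the two inequalities combine into $|u|\frac{b-a}2$. In the classical $o$-symmetric proof this works because the chord at height $-t$ is $-[a(t),b(t)]$. Here there is no such symmetry, so the pairing must instead exploit the reflection $R$ in $\ell_{p,q}$, which preserves $\sigma$ and $C_{p,q}$. I would first try to pair the slice $\{v=v_1\}$ of $M^\circ$ with the slice $\{v=v_2\}$ of $R(M^\circ)=(RM)^\circ$, where $RM$ has chords $[-b,-a]$. Adding $u_1b+v_1t\le1$ and $(-u_2)(-a)+v_2t\le1$, and treating the signs of $u_1,u_2$ and the endpoints by convexity, should give the desired inclusion.

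Writing $f(v)$ and $g(v)$ for the lengths of the slices of $M^\circ$ and $M'^\circ$, the inclusion would yield
\[
g\Bigl(\tfrac{v_1+v_2}2\Bigr)\ \ge\ \tfrac12 f(v_1)+\tfrac12 f(v_2).
\]
Taking $v_1=v_2$ (or using the Brunn--Minkowski/Prékopa--Leindler inequality in dimension one) and integrating in $v$ then gives $|M'^\circ|\ge|M^\circ|$. Two further checks are needed. Since $M'\in\mathcal{K}_{p,q}$, the relation \eqref{relative-polar} places $M'^\circ$ in $\mathcal{K}_{p,q}$, so the constraint $x\in\sigma$ is harmless. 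The $v$-ranges of the slices must also match, which should follow from $o,p,q$ lying in all the bodies involved.

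The main obstacle is exactly this sign bookkeeping in the pairing step. Because $\sigma$ is only a cone, a slice of $M^\circ$ need not be symmetric, and it need not contain $u=0$ unless $(0,v)\in\sigma$. If the direct pairing fails, I would fall back on a reduction to \eqref{Steiner-symm}: glue $M$ to its images under suitable reflections to form an $o$-symmetric body whose polar restricted to $\sigma$ is $M^\circ$, and whose Steiner symmetral restricted to $\sigma$ is $M'$. This is possible at least when $\angle(p,o,q)=\pi/2$; for smaller angles the gluing requires more care.
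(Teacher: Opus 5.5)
Your argument that $M'\in\mathcal{K}_{p,q}$ is fine. The main argument for $|M'^\circ|\ge|M^\circ|$, however, has a genuine gap: you slice in the wrong direction.

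\textbf{The proposed inclusion fails.} You aim for $(u_1,v_1),(u_2,v_2)\in M^\circ\Rightarrow(\tfrac{u_1-u_2}2,\tfrac{v_1+v_2}2)\in M'^\circ$ for arbitrary $u_1,u_2$. This is false. So is the slice bound $g(\tfrac{v_1+v_2}2)\ge\tfrac12f(v_1)+\tfrac12f(v_2)$ for slices orthogonal to $\ell_{p,q}$, already for $v_1=v_2$. The $v$-ranges need not match either.

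Here is a counterexample. Let $\angle(p,o,q)=\pi/2$, so $r=p+q$, and let $M=[o,p,q,y]$ with $y=q+\tfrac12p\in[q,r]$.
\begin{itemize}
\item The point $z=p+\tfrac12q$ lies in $M^\circ$, since $\langle z,p\rangle=\langle z,y\rangle=1$ and $\langle z,q\rangle=\tfrac12$. Its coordinate along $\ell_{p,q}$ is $v=\tfrac{3}{2\sqrt2}$.
\item The top point of $M'$ is $\tfrac34(p+q)$. So every $x\in M'^\circ$ satisfies $\langle x,\tfrac34(p+q)\rangle\le1$, i.e.\ $v\le\tfrac{2\sqrt2}{3}<\tfrac{3}{2\sqrt2}$.
\item Hence, just below the level of $z$, the slices of $M^\circ$ orthogonal to $\ell_{p,q}$ have positive length, while those of $M'^\circ$ are empty.
\item Taking $(u_1,v_1)=(u_2,v_2)=z$ in your inclusion would put $\tfrac34(p+q)$ into $M'^\circ$. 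But $\langle\tfrac34(p+q),\tfrac34(p+q)\rangle=\tfrac98>1$.
\end{itemize}
The algebra shows why this happens: $u_1b+u_2a=\tfrac{u_1-u_2}2(b-a)+\tfrac{u_1+u_2}2(a+b)$, and the last term has no sign unless $u_2=-u_1$.

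\textbf{Repair by slicing parallel to $\ell_{p,q}$.} The case $u_2=-u_1$ is the actual Meyer--Pajor pairing, and it works if you slice by lines \emph{parallel} to $\ell_{p,q}$ (fixed $u$). Suppose $(u,v_1),(-u,v_2)\in M^\circ$. Add $ub+v_1t\le1$ to $-ua+v_2t\le1$, and $ua+v_1t\le1$ to $-ub+v_2t\le1$. This gives $|u|\tfrac{b-a}2+\tfrac{v_1+v_2}2t\le1$. The cone $\sigma=\{|u|\le v\tan\tfrac{\alpha}{2}\}$, with $\alpha=\angle(p,o,q)$, is preserved, so $(u,\tfrac{v_1+v_2}2)\in M'^\circ$. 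Since $[o,p,q]\subset M^\circ\subset C_{p,q}$, every slice with $|u|\le\sin\tfrac{\alpha}2$ is nonempty. Hence $g(u)\ge\tfrac12(f(u)+f(-u))$, and integrating gives the claim with no reflection $R$ needed.

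\textbf{Or complete the gluing, as the paper does.} Your fallback is essentially the paper's proof, but as written it is only a pointer. Your worry about angles smaller than $\pi/2$ is unfounded: the paper takes $K=[M,-M]$ for every angle.
\begin{itemize}
\item The lines ${\rm aff}\{p,-q\}$ and ${\rm aff}\{-p,q\}$ support $K$. So $K$ meets the strip $\Sigma$ between ${\rm aff}\{p,q\}$ and ${\rm aff}\{-p,-q\}$ in the rectangle $[\pm p,\pm q]$, and the Steiner symmetral of $K$ is $[M',-M']$.
\item $K^*$ and $K'^*$ meet $\Sigma$ in the same hexagon $[\pm p,\pm q,\pm f]$, where $f$ is the polar of ${\rm aff}\{p,-q\}$.
\item Outside $\Sigma$, $K^*$ and $K'^*$ consist of two copies of $M^\circ\setminus[o,p,q]$ and of $M'^\circ\setminus[o,p,q]$, respectively.
\end{itemize}
Then \eqref{Steiner-symm} yields $|M'^\circ|\ge|M^\circ|$.

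Either route works, but you need to actually carry one of them out.
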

\begin{proof} Since $C_{p,q}$ is symmetric through $\ell_{p,q}$, we have $M'\in \mathcal{K}_{p,q}$. 

We consider the $o$-symmetric convex body $K=[M,-M]$, and  let $f$ be the polar of ${\rm aff}\,\{p,-q\}$ and let $\Sigma$ be the closed strip bounded by the parallel lines ${\rm aff}\,\{p,q\}$ and ${\rm aff}\,\{-p,-q\}$. Since \eqref{point-line-polarity} yields that
$K^*=[M^\circ,-M^\circ,f,-f]$, $K'^*=[M'^\circ,-M'^\circ,f,-f]$ and $K'^*\cap \Sigma=[\pm p,\pm, q,\pm f]=K^*\cap \Sigma$, we deduce from \eqref{Steiner-symm} that
$$
2|M'^\circ|-2\big|[0,p,q]\big|=\left|K'^*\backslash \Sigma\right|\geq \left|K^*\backslash \Sigma\right|=2|M^\circ|-2\big|[0,p,q]\big|,
$$
proving \eqref{Steiner-symm-inC-eq}.
\end{proof}

The next claim states that $C_{p,q}$ or $C_{p,q}^\circ=[o,p,q]$ can't be optimal.

\begin{claim}
\label{C-not-optimal}
For $p,q\in S^1$ with $0<\angle(p,o,q)\leq \frac{\pi}2$,
there exists a type A pentagon $P\in \mathcal{K}_{p,q}$ symmetric through $\ell_{p,q}$ such that
$$
|C_{p,q}|+|C_{p,q}^\circ|<|P|+|P^\circ|.
$$
\end{claim}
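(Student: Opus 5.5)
We truncate the corner $r$ of $C_{p,q}$ by a line orthogonal to $\ell_{p,q}$ and then compare the two areas directly. Write $2\alpha=\angle(p,o,q)$, so $0<\alpha\leq\frac{\pi}4$. Let $u=\frac{p+q}{\|p+q\|}$ be the unit vector spanning $\ell_{p,q}$. Then $\langle p,u\rangle=\langle q,u\rangle=\cos\alpha$ and $r=u/\cos\alpha$.

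\textbf{Construction.} For $h\in(\cos\alpha,1/\cos\alpha)$ set
$$
P_h=C_{p,q}\cap\{x:\langle x,u\rangle\leq h\}.
$$
Since $h>\cos\alpha$, the line $\langle x,u\rangle=h$ meets the open segments $(p,r)$ and $(q,r)$, at points $a$ and $b$ say. Hence $P_h=[o,p,a,b,q]$ is a pentagon. It is symmetric through $\ell_{p,q}$, it belongs to $\mathcal{K}_{p,q}$, and it is of type A because $[p,r]\cap P_h=[p,a]$ is a nondegenerate segment.

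\textbf{Computing $P_h^\circ$.} By \eqref{point-line-polarity}, the edges $[p,a]$ and $[q,b]$ lie on the tangent lines at $p$ and $q$, whose polars are $p$ and $q$. The edge $[a,b]$ lies on the line $\langle x,u\rangle=h$, whose polar is $u/h$. The remaining edges lie on the boundary of $\sigma$. Therefore $P_h^\circ=[o,p,u/h,q]$, which is of type B, in agreement with \eqref{typea-polar-typeB}.

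\textbf{Area comparison.} Put $d=1/\cos\alpha-h>0$.
\begin{itemize}
\item $C_{p,q}\setminus P_h$ is the isosceles triangle $[a,r,b]$. Its height from $r$ is $d$ and its half-angle at $r$ is $\frac{\pi}2-\alpha$. Hence $|C_{p,q}|-|P_h|=d^2\cot\alpha$.
\item $P_h^\circ\setminus C_{p,q}^\circ$ is the triangle $[p,u/h,q]$, with base $\|p-q\|=2\sin\alpha$ and height $1/h-\cos\alpha$. Hence $|P_h^\circ|-|C_{p,q}^\circ|=\sin\alpha\,(1/h-\cos\alpha)$.
\item Since $1/h=\cos\alpha/(1-d\cos\alpha)\geq\cos\alpha+d\cos^2\alpha$, this gain is at least $d\sin\alpha\cos^2\alpha$.
\end{itemize}
Combining these,
$$
|P_h|+|P_h^\circ|-|C_{p,q}|-|C_{p,q}^\circ|\geq d\sin\alpha\cos^2\alpha-d^2\cot\alpha
=d\cot\alpha\,\big(\sin^2\alpha\cos\alpha-d\big).
$$
This is positive for $0<d<\sin^2\alpha\cos\alpha$. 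Such a $d$ also satisfies $h>\cos\alpha$, since $1/\cos\alpha-\cos\alpha=\sin^2\alpha/\cos\alpha>\sin^2\alpha\cos\alpha$. Fixing such a $d$ gives the required pentagon $P=P_h$.

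The only step that needs care is the polar computation. One must check that $P_h^\circ$ has no vertices other than $o,p,q,u/h$. This holds because the boundary edges of $P_h$ on $\partial\sigma$ contribute nothing to the relative polar, and the three remaining edges each contribute exactly one vertex. Everything else is elementary: the loss in $|P_h|$ is second order in $d$, while the gain in $|P_h^\circ|$ is first order in $d$.
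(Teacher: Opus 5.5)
Your proof is correct and follows the paper's argument. The paper also cuts off the vertex $r$ of $C_{p,q}$ by a line orthogonal to $\ell_{p,q}$: its $\ell_t=\{\langle x,r\rangle=\langle r,r\rangle-t\}$ is your line $\langle x,u\rangle=h$. It obtains $P_t^\circ=[o,p,q,\ell_t^*]$ and compares a loss of order $t^2$ in $|P_t|$ with a gain of order $t$ in $|P_t^\circ|$. Your explicit formulas, including the bound $1/h\geq\cos\alpha+d\cos^2\alpha$, just make the paper's constants $\gamma,\delta$ concrete and the first-order comparison fully rigorous.
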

\begin{proof}
For small $t>0$, we consider the line $\ell_t=\{x\in\R^2:\langle x,r\rangle= \langle r,r\rangle-t\}$ and the type A pentagon
$P_t=\{x\in C_{p,q}:\langle x,r\rangle\leq \langle r,r\rangle-t\}\in \mathcal{K}_{p,q}$ symmetric through $\ell_{p,q}$, and hence $P_t^\circ=[o,p,q,\ell_t^*]$, and
there exist constants $\gamma,\delta>0$ such that $|P_t|=|C_{p,q}|-\gamma t^2$ and $|P_t^*|=|C_{p,q}^*|+\delta t$. In turn, we conclude Claim~\ref{C-not-optimal}.
\end{proof}

The following simple well-known observation is the basis of our argument for Lemma~\ref{areasum-polygon-variation}. 

\begin{claim}
\label{minimal-area-cut}
Let $\tau=[h_1,h_2]$ be a convex cone bounded by the two non-collinear half-lines $h_1$ and $h_2$ meeting at the apex $a$ of $\tau$, and let $m\in{\rm int}\,\tau$. We write $\ell_0$ to denote the unique line passing through $m$ such that $m$ is the midpoint of the segment $\ell_0\cap\tau$; namely, $b_i=\ell_0\cap h_i$ is the intersection of $h_i$ and the reflected image of $h_{3-i}$ through $m$ for $i=1,2$. In addition, let $\ell_i$ be the line through $m$ parallel to $h_i$ and intersecting $h_{3-i}$ in $c_{3-i}$ for $i=1,2$.

If the line $\ell$ through $m$ is rotated from the position $\ell_2$ parallel to $h_2$ towards $\ell_0$ in a way such that $\ell$ intersects $h_{1}$ in a point $d_{1}$ moving from $c_{1}$ towards $b_{1}$, then the area of the triangle cut off from $\tau$ by $\ell$ is strictly decreasing. In particular, among lines passing through $m$, $\ell_0$ cuts off the smallest area triangle from $\tau$.
\end{claim}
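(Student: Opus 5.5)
Our plan is to compare the triangle cut off by a general line $\ell$ through $m$ with the one cut off by $\ell_0$, using the point reflection through $m$. Throughout, let $\ell$ be a line through $m$ that meets both $h_1$ and $h_2$ (beyond $a$), say $\ell\cap h_1=d_1$ and $\ell\cap h_2=d_2$. The triangle it cuts off is $T_\ell=[a,d_1,d_2]$. Note that $m$ splits $[d_1,d_2]$.

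\emph{Parametrize the rotation.} As $\ell$ rotates from $\ell_2$ towards $\ell_0$, the point $d_1$ moves along $h_1$ from $c_1$ to $b_1$. At the same time, $d_2$ moves along $h_2$ from infinity to $b_2$. Since $d_1,m,d_2$ are collinear with $m$ between them, $d_2$ is determined by $d_1$. It is monotone: as $d_1$ moves towards $a$, $d_2$ moves away from $a$, and conversely.

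\emph{Compare two positions.} Take two positions $\ell,\ell'$ of the rotating line, with $\ell'$ strictly closer to $\ell_0$. Then $d_1'$ lies strictly between $d_1$ and $b_1$, and $d_2'$ lies strictly between $b_2$ and $d_2$; in particular $d_2'$ is closer to $a$ than $d_2$ is. We now compare the two triangles. The set $T_\ell\setminus T_{\ell'}$ contains the triangle $\Delta_2=[m,d_2',d_2]$, which lies on the $h_2$ side of $m$. The set $T_{\ell'}\setminus T_\ell$ is the triangle $\Delta_1=[m,d_1,d_1']$, which lies on the $h_1$ side of $m$. Hence $|T_\ell|-|T_{\ell'}|=|\Delta_2|-|\Delta_1|$.

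\emph{Reflect through $m$.} Let $\rho$ denote the point reflection through $m$. It maps $\ell$ and $\ell'$ to themselves, and it maps $h_1$ to a line $\rho(h_1)$ parallel to $h_1$ passing through $b_2$; this follows from the definition of $b_2$. So $\rho(\Delta_1)=[m,\rho(d_1),\rho(d_1')]$ has vertices on $\ell$ and $\ell'$, namely on the rays from $m$ pointing towards $h_2$. The key geometric point is where these vertices sit relative to $\Delta_2$. Because $d_1$ lies on the $c_1$ side of $b_1$, the reflected line $\rho(h_1)$ crosses $\ell$ and $\ell'$ between $m$ and $h_2$. This holds because $\rho(h_1)$ meets $h_2$ at $b_2$, and on the side of $b_2$ where $d_2,d_2'$ lie, $\rho(h_1)$ stays inside $\tau$. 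Therefore $\rho(\Delta_1)\subset\Delta_2$.

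\emph{Strictness, the hardest step.} We must show the inclusion $\rho(\Delta_1)\subset\Delta_2$ is proper, so that $|\Delta_1|<|\Delta_2|$ and the area is strictly decreasing. The region $\Delta_2\setminus\rho(\Delta_1)$ is bounded by the segments of $\rho(h_1)$ and $h_2$ between $\ell$ and $\ell'$. These segments are distinct, because $\rho(h_1)\ne h_2$: the two lines are not parallel. The only exception would be the degenerate configuration where $d_2'=b_2$ coincides with the crossing point, but even there the sliver between $\rho(h_1)$ and $h_2$ has positive area. This gives strict monotonicity.

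The ``in particular'' follows by applying the same argument with the roles of $h_1$ and $h_2$ swapped, which covers the lines on the other side of $\ell_0$.
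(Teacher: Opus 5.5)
Your proof is correct and is essentially the paper's argument. Both reduce the claim to comparing $[m,d_1,d_1']$ with $[m,d_2,d_2']$, which have equal (vertical) angles at $m$. Both then rest on the fact that along each rotated line the piece from $m$ to $h_1$ is shorter than the piece from $m$ to $h_2$. Your inclusion $\rho(\Delta_1)\subset\Delta_2$ is just a geometric repackaging of the paper's inequalities $\|d_1-m\|<\|d_2-m\|$ and $\|d_1'-m\|<\|d_2'-m\|$, which the paper gets by comparing distances to $\ell_2$.

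The one loosely worded step is why $\rho(d_1)\in\tau$. The clean reason is that $\rho$ maps $[c_1,b_1]$ onto $[2m-c_1,b_2]\subset\tau$; note that $2m-c_1=m+(m-c_1)$ with $m-c_1$ parallel to $h_2$. Since $\rho(d_1)\in h_2$ only if $d_1=b_1$, this also gives strictness directly, without your discussion of slivers.
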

\begin{proof}
Let $d_1, d'_1\in (c_1,b_1)$ such that $d'_1\in(d_1,b_1)$, let  $\ell={\rm aff}\{m,d_1\}$ and $\ell'={\rm aff}\{m,d'_1\}$. If $d_2=\ell\cap h_2$ and $d'_2=\ell'\cap h_2$, then Claim~\ref{minimal-area-cut} is equivalent to proving that $\big|[a,d_1,d_2]\big|> \big|[a,d'_1,d'_2]\big|$, which is, in turn, is equivalent to the statement
\begin{equation}
\label{ell-rotated-to-ellprime}
\big|[m,d_1,d'_1]\big|< \big|[m,d_2,d'_2]\big|.
\end{equation}
However, $\|d_1-m\|<\|d_2-m\|$ as $d_1$ is closer to $\ell_2$ than $b_1$, and, in turn, closer to $\ell_2$ than $d_2$. As similarly $\|d'_1-m\|<\|d'_2-m\|$ and the angles of the triangles $[m,d_1,d'_1]$ and $[m,d_2,d'_2]$ at $m$ coincide, we conclude \eqref{ell-rotated-to-ellprime}.
\end{proof}

Next, we consider the variation of the "area sum" when a polygon $P\in\mathcal{K}_{p,q}$  is suitably deformed. 

\begin{lemma}
\label{areasum-polygon-variation}
For $p,q\in S^1$ with $0<\angle(p,o,q)\leq \frac{\pi}2$,
let $u,v,w$ be consecutive vertices of a polygon $P\in\mathcal{K}_{p,q}$ of at least $5$ vertices such that $[v,w]$ intersects ${\rm int}\,C_{p,q}$, and let $\tilde{u}\in [u,v]\backslash\{v\}$ be such that the intersection $s$ of $[o,v]$ and  $[\tilde{u},w]$ lies in $(\tilde{u},\mu)$ for $\mu=(\tilde{u}+w)/2$. In addition, let $P_0\in\mathcal{K}$  be the closure of $P\backslash [\tilde{u},v,w]$. 

If $\tilde{v}-v$ is a non-zero vector parallel to $\ell_w={\rm aff}\{\tilde{u},w\}$ for $\tilde{v}\in{\rm int}\,C_{p,q}$ such that the intersection $\tilde{s}$ of $[o,\tilde{v}]$ and  $\ell_w$ lies in $(s,\mu)$ (and hence "$v$ is moved parallel to $\ell_w$ towards $w$" into the position $\tilde{v}$), and the polygon $\widetilde{P}=[\tilde{v},P_0]\in\mathcal{K}$ still has $w$ as a vertex, then
\begin{equation}
\label{areasum-polygon-variation-eq}
|\widetilde{P}|+|\widetilde{P}^\circ|>|P|+|P^\circ|.
\end{equation}
\end{lemma}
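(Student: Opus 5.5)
The plan is to treat the two summands separately: show $|\widetilde P|\ge |P|$ by an elementary area argument, and get the strict inequality $|\widetilde P^\circ|>|P^\circ|$ by dualizing the motion of $v$ into the rotation of a line about a fixed point in the polar picture, where Claim~\ref{minimal-area-cut} applies.

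\emph{Primal side.} Since $P=P_0\cup[\tilde u,v,w]$ and $\widetilde P=[\tilde v,P_0]\supseteq P_0\cup[\tilde u,\tilde v,w]$, we get $|\widetilde P|\ge |P_0|+|[\tilde u,\tilde v,w]|$. Because $\tilde v-v$ is parallel to $\ell_w={\rm aff}\{\tilde u,w\}$, the triangles $[\tilde u,v,w]$ and $[\tilde u,\tilde v,w]$ have the same base and the same height, so they have equal area. Hence $|\widetilde P|\ge |P|$. The hull may be strictly larger if $\tilde u$ stops being extremal, but we do not need that.

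\emph{Polar side.} Write $Q=P_0^\circ\supseteq P^\circ$. Then $P^\circ=Q\cap\{x:\langle x,v\rangle\le1\}$ and $\widetilde P^\circ=Q\cap\{x:\langle x,\tilde v\rangle\le 1\}$, both inside $\sigma$.
\begin{itemize}
\item By \eqref{point-line-polarity}, $v$ and $\tilde v$ lie on a line $L$ parallel to $\ell_w$. Hence the lines $v^*$ and $\tilde v^*$ both pass through the fixed point $z=L^*$. Moreover $z$ lies on the ray from $o$ through $m:=\ell_w^*$, and $m$ is the intersection point of the lines $\tilde u^*$ and $w^*$.
\item Near $m$, the body $Q$ coincides with the cone $\tau$ with apex $m$ bounded by $\tilde u^*$ and $w^*$, and $z\in{\rm int}\,\tau$.
\item Passing from $P$ to $\widetilde P$ replaces the triangle cut from $\tau$ by the line $v^*$ through $z$ with the triangle cut by $\tilde v^*$ through $z$. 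The vertices of these triangles are $m$, $(\mathrm{aff}\{\tilde u,v\})^*$ and $(\mathrm{aff}\{v,w\})^*$, and correspondingly for $\tilde v$.
\item The hypothesis that $\widetilde P$ still has $w$ as a vertex should guarantee that the cut triangle stays inside the part of $Q$ where $Q$ agrees with $\tau$. I would check this explicitly, and it may need the hypothesis $[v,w]\cap{\rm int}\,C_{p,q}\neq\emptyset$ so that the relevant vertices of $Q$ are really $\tilde u^*\cap w^*$ and not a side of $\sigma$.
\end{itemize}
So $|\widetilde P^\circ|-|P^\circ|$ equals the area of the triangle cut by $v^*$ minus the area of the triangle cut by $\tilde v^*$, and it suffices to show this difference is positive.

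\emph{Key step.} This is where Claim~\ref{minimal-area-cut} enters, and where I expect the main obstacle. I must identify the position of the rotating line $x^*$ through $z$ with the direction of $x\in L$, which is encoded by the point $[o,x]\cap\ell_w$. The claim to prove is a projective correspondence: $x^*$ is the midpoint line $\ell_0$ for $z$ in $\tau$ exactly when $[o,x]$ passes through $\mu=(\tilde u+w)/2$; and as $[o,x]\cap\ell_w$ moves from $\tilde u$ towards $\mu$, the line $x^*$ rotates from the position parallel to one side of $\tau$ towards $\ell_0$. I would first verify this in coordinates: apply a linear map using \eqref{point-line-polarity} so that $\ell_w$ is horizontal and $\mu$ lies on the vertical axis, then compute the two intersection points of $x^*$ with $\tilde u^*$ and $w^*$ as functions of the direction of $x$. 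In these coordinates the midpoint condition should reduce to symmetry.

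Granting the correspondence, the hypothesis $s\in(\tilde u,\mu)$ and $\tilde s\in(s,\mu)$ means that the line rotates strictly from $v^*$ towards $\ell_0$ without passing it. Claim~\ref{minimal-area-cut} then says the area of the cut triangle strictly decreases, so $|\widetilde P^\circ|>|P^\circ|$. Adding this to $|\widetilde P|\ge |P|$ gives \eqref{areasum-polygon-variation-eq}.
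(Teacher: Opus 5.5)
Your proposal is correct and follows the paper's proof essentially step for step. The paper also uses the fixed point $\ell_v^*\in(o,\ell_w^*)$ through which both $v^*$ and $\tilde v^*$ pass. It identifies the midpoint line of Claim~\ref{minimal-area-cut} as $\tilde\mu^*$ with $\tilde\mu=\ell_v\cap{\rm aff}\{o,\mu\}$, which is exactly the projective correspondence you planned to check in symmetric coordinates, and it concludes $|\widetilde P^\circ|>|P^\circ|$ from $\tilde v\in(v,\tilde\mu)$. On the primal side the paper has $|\widetilde P|=|P|$. This holds because the hypothesis that $w$ stays a vertex makes $P_0\cup[\tilde u,\tilde v,w]$ convex, so your worry about $\tilde u$ ceasing to be extremal does not arise; in any case your $|\widetilde P|\ge|P|$ suffices.
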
 
\begin{proof}
For the line $\ell_v={\rm aff}\{v,\tilde{v}\}$ parallel to $\ell_w$, we observe that $\ell_v^*\in(0,\ell_w^*)\subset {\rm int}\,P_0^\circ$, and $P^\circ$ and $\widetilde{P}^\circ$ are obtained from $P_0^\circ$ by $v^*$ and $\tilde{v}^*$, respectively, cut off the vertex $\ell_w^*$ of $P_0^\circ$ where both $v^*$ and $\tilde{v}^*$ intersect both sides of $P_0^\circ$ containing $\ell_w^*$; namely, the ones contained in $\tilde{u}^*$ and $w^*$. We note that the point $\ell_v^*\in {\rm int}\,P_0^\circ$ is contained in the lines $v^*$ and $\tilde{v}^*$, and  writing $\tilde{\mu}=\ell_v\cap {\rm aff}\,\{0,\mu\}$, the point $\ell_v^*$ is the midpoint of the intersection of the line $\tilde{\mu}^*$ with $P_0^\circ$ where again  $\tilde{\mu}^*$ intersects both sides of $P_0^\circ$ containing $\ell_w^*$. Since $\tilde{v}\in (v,\tilde{\mu})$, we deduce from Claim~\ref{minimal-area-cut} that $|\widetilde{P}^\circ|>|P^\circ|$. As readily $|\widetilde{P}|=|P|$, we conclude \eqref{areasum-polygon-variation-eq}.
\end{proof}

Observe that in Lemma~\ref{areasum-polygon-variation}, we have estimated $|P|+|P^{\circ }|$ from above, by
fixing $|P|$, and increasing $|P^{\circ }|$. In the proof of Proposition~\ref{BS-inC}, we will often apply this method. However, we also will often
change the roles of $P$ and $P^{\circ }$. This amounts to estimate the
sum of areas from above, by fixing the area of the polar set, and
increasing the area of the original set.

The final auxiliary statement yields that $p$ and $q$ can be assumed orthogonal.

\begin{claim}
\label{pq-orthogonal}
For $p,q\in S^1$ with $0<\alpha=\angle(p,o,q)< \frac{\pi}2$,
 let $q'\in S^1$ be orthogonal to $p$ with $\langle q,q'\rangle>0$. For $N\in\mathcal{K}_{p,q'}$, we write
$$
N^{\circ'}=\{x\in C_{p,q'}:\langle x,y\rangle\leq 1\mbox{ for }y\in N\}.
$$
If $M\in\mathcal{K}_{p,q}$ and $\aleph=\{sq+tq'\in B^2:s,t\geq 0\}$, then $M\cup\aleph\in\mathcal{K}_{p,q'}$, and
\begin{equation}
\label{pq-orthogonal-eq}
|M\cup\aleph|+\big|(M\cup\aleph)^{\circ'}\big|=|M|+|M^\circ|+\frac{\pi}2-\alpha.
\end{equation}
\end{claim}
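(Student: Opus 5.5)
We work directly from the definitions and compute both areas by splitting the cone $\sigma'=\{tp+sq':s,t\geq 0\}$ into $\sigma$ and the cone $\sigma''=\{sq+tq':s,t\geq 0\}$. These two cones meet only along the ray through $q$, and $\aleph=B^2\cap\sigma''$ is a circular sector of angle $\frac{\pi}2-\alpha$. Hence $|\aleph|=\frac12(\frac{\pi}2-\alpha)$. The identity \eqref{pq-orthogonal-eq} will then follow once we show
$$
|M\cup\aleph|=|M|+|\aleph|,\qquad (M\cup\aleph)^{\circ'}=M^\circ\cup\aleph .
$$

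\emph{Step 1: $M\cup\aleph\in\mathcal{K}_{p,q'}$.} Since $M\subset C_{p,q}\subset\sigma$ and $\aleph\subset\sigma''$, the union has the stated area. For convexity, note that $M\cap[o,q]$ is all of $[o,q]$, because $o,q\in M$ and $C_{p,q}\cap{\rm aff}\{o,q\}=[o,q]$. So $M$ and $\aleph$ are convex sets glued along the common segment $[o,q]$, and they lie in the two convex cones $\sigma$, $\sigma''$ whose union $\sigma'$ is convex (the angle is at most $\frac{\pi}2$).

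A union of two convex sets lying in adjacent cones and sharing the full boundary segment on the common ray is convex provided the union is locally convex at $q$. At $q$, the tangent line $q^*$ to $B^2$ supports both pieces: $M\subset C_{p,q}\subset\{\langle x,q\rangle\le1\}$, and likewise $\aleph\subset B^2$. In more detail, any segment joining $x\in M$ to $y\in\aleph$ crosses the ray through $q$ at a point of norm at most $1$; this uses $\langle\cdot,q\rangle\leq 1$ on both sets. Hence it crosses inside $[o,q]$, and the two halves of the segment lie in $M$ and in $\aleph$ respectively.

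Containment in $C_{p,q'}$ is clear: $C_{p,q'}=\sigma'\cap\{\langle x,p\rangle\le1,\langle x,q'\rangle\le1\}$, $M\subset C_{p,q}$, and $\aleph\subset B^2$. Finally $o,p,q'$ all belong to the union.

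\emph{Step 2: the relative polar.} For $x\in\sigma'$ we need $\langle x,y\rangle\le 1$ for all $y\in M\cup\aleph$.

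If $x\in\sigma''$, then for $y\in M\subset\sigma$ the angle between $x$ and $y$ is at least the angle between $x$ and the ray through $q$ (the cones are adjacent and the total angle is $\le\frac{\pi}2$). Combining this with $q\in\aleph$ and $\langle x,y\rangle\le\|x\|\,\|y\|\cos$ of that angle, we expect the constraint from $M$ to be implied by $\langle x,z\rangle\le1$ for $z\in\aleph$. The constraint from $\aleph$ gives exactly $x\in B^2$, since the sector contains $x/\|x\|$. So the $\sigma''$ part of the polar is $\aleph$.

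If $x\in\sigma$, the constraint from $M$ gives $x\in M^\circ$. We then check that $M^\circ\subset C_{p,q}$ forces $\langle x,z\rangle\le1$ for $z\in\aleph$. This holds because such $z$ lie at angle $\ge\angle(x,q)$ from $x$, so $\langle x,z\rangle\le\langle x,q\rangle\cdot$ (a factor $\le 1$) $\le 1$, using $\langle x,q\rangle\le1$ on $C_{p,q}$.

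\emph{Main obstacle.} The cross-constraint in the case $x\in\sigma''$, $y\in M$, needs care. The cleanest route is monotonicity of $\langle x,y\rangle$ along the sweep: write $y$ in polar form and use $\|y\|\cos(\angle(x,y))\le$ the value at the projection onto the ray of $q$. Concretely, we compare with the point $y'=\langle y, q\rangle q$-direction point and use $\langle y,q\rangle\le1$. With both sets identified, adding areas gives $|M|+|M^\circ|+2|\aleph|$, which equals the right-hand side of \eqref{pq-orthogonal-eq}.
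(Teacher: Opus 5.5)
Your proposal is correct and is essentially the paper's proof: $M\cup\aleph$ is convex because the tangent line to $B^2$ at $q$ supports both pieces, so any chord from $M$ to $\aleph$ crosses the ray through $q$ inside $[o,q]$; and the polar splits as $(M\cup\aleph)^{\circ'}=M^\circ\cup\aleph$, which you check in more detail than the paper does.

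One correction concerns the cross-case $x\in\aleph$, $y\in M$. There the comparison with $\angle(x,q)$ that you state first does not suffice. The comparison from your last paragraph does: $\langle x,y\rangle\le\|y\|\cos\angle(x,y)\le\|y\|\cos\angle(y,q)=\langle y,q\rangle\le 1$, using $\|x\|\le 1$. Equivalently, $\langle x,y\rangle\le\langle x,q\rangle\langle y,q\rangle$ because $x,y$ lie on opposite sides of ${\rm aff}\{o,q\}$, and this form handles both cross-cases at once.
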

\begin{proof}
Since the tangent line to $B^2$ at $q$ is a supporting line to both $M$ and $\aleph$, we deduce that $M\cup\aleph$ is convex, and hence $M\cup\aleph\in\mathcal{K}_{p,q'}$. Moreover, \eqref{point-line-polarity} yields that $(M\cup\aleph)^{\circ'}=M^\circ\cup\aleph$, proving \eqref{pq-orthogonal-eq}.
\end{proof}

\begin{proof}[Proof of Proposition~\ref{BS-inC}]
According to Claim~\ref{pq-orthogonal}, we may assume that $p,q\in S^1$ are orthogonal in Proposition~\ref{BS-inC}.

During the proof of Proposition~\ref{BS-inC}, we approximate any element $M\in\mathcal{K}_{p,q}$ symmetric through $\ell_{p,q}$ by polygons symmetric through $\ell_{p,q}$. \\

\noindent{\bf Step 1.} {\it Properties (i), (ii) and (iii) of some extremal polygons in $\mathcal{K}_{p,q}$.}

For $m\geq 0$, we write $\mathcal{P}^{(m)}$ to denote the family of polygons $P\in \mathcal{K}_{p,q}$  that are symmetric through $\ell_{p,q}$ and have exactly $m$ sides that intersect ${\rm int}\,C_{p,q}$. In particular,  the only element of $\mathcal{P}^{(0)}$ is $C_{p,q}$. For $m\geq 1$, we call any endpoint of a side of a $P\in \mathcal{P}^{(m)}$, $P\neq C_{p,q}$, intersecting ${\rm int}\,C_{p,q}$ a proper vertex of $P$. 
For $m\geq 1$, let $\mathcal{P}^{(m)}_A$ and $\mathcal{P}^{(m)}_B$ be the family of elements in $\mathcal{P}^{(m)}$  of type A and type B, respectively, and hence we deduce from \eqref{point-line-polarity} that
\begin{equation}
\label{type-AB-polar}
P\in \mathcal{P}^{(m)}_A \mbox{ \ if and only if \ } P^\circ\in \mathcal{P}^{(m+1)}_B.
\end{equation}
For $m\geq 1$, we write $\widetilde{\mathcal{P}}^{(m)}=\bigcup_{i=0}^m\mathcal{P}^{(i)}$, $\widetilde{\mathcal{P}}^{(m)}_A=\bigcup_{i=0}^m\mathcal{P}^{(i)}_A$ and $\widetilde{\mathcal{P}}^{(m)}_B=\bigcup_{i=0}^m\mathcal{P}^{(i)}_B$, and hence if $P_n\in \widetilde{\mathcal{P}}^{(m)}_A$ is a sequence of polygons tending to a convex body $Q$, then 
\begin{equation}
\label{PmA-limit}
Q\in \widetilde{\mathcal{P}}^{(m)},
\end{equation}
and if $P_n\in \widetilde{\mathcal{P}}^{(m+1)}_B$ is a sequence of polygons tending to a convex body $Q$, then 
\begin{equation}
\label{Pm+1B-limit}
\mbox{either }Q\in \widetilde{\mathcal{P}}^{(m+1)}_B \mbox{ \ or \ }Q\in \widetilde{\mathcal{P}}^{(m-1)}_A.
\end{equation}
For $m\geq 6$, \eqref{type-AB-polar}, \eqref{PmA-limit} and \eqref{Pm+1B-limit} yield that there exists a polygon $Q_{(m)}\in \mathcal{K}_{p,q}$ such that either $Q_{(m)}\in \widetilde{\mathcal{P}}^{(m)}_A$ or $Q_{(m)}\in \widetilde{\mathcal{P}}^{(m+1)}_B$, and
$$
\left|Q_{(m)}\right|+\left|Q_{(m)}^\circ\right|=\sup\big\{|P|+|P^\circ|:\mbox{ either }P\in \mathcal{P}^{(m)}_A \mbox{  or } P\in \mathcal{P}^{(m+1)}_B\big\}.
$$
It follows from Claim~\ref{C-not-optimal} that $Q_{(m)}\neq C_{p,q}$ and $Q_{(m)}\neq [o,p,q]$. We may assume that the vertices $o,p,r,q$ of $C_{p,q}$ are in clockwise order in this order, and let $x_0,\ldots,x_k$ be the 
proper vertices of $Q_{(m)}$ in clockwise order along $\partial Q_{(m)}$ where $1\leq k\leq m$ and $x_0\in[p,r]\backslash\{r\}$. We claim that
\begin{description}
\item{(i)} $k\geq m-1$; 
\item{(ii)} for any $i=1,\ldots,k-1$, 
the points $o$, $x_i$ and $(x_{i-1}+x_{i+1})/2$ are collinear;

\item{(iii)} for any $i=2,\ldots,k-1$, if $\eta_i$ is the intersection point of ${\rm aff}\{x_{i-1},x_{i-2}\}$
and ${\rm aff}\{x_{i},x_{i+1}\}$,
then $o$, $\eta_i$ and $(x_{i-1}+x_i)/2$ are collinear. 

\end{description}
By the symmetry of the roles of $Q_{(m)}$ and $Q_{(m)}^\circ$, we also claim that the analogues of (i), (ii) and (iii) hold for the proper vertices of $Q_{(m)}^\circ$.

The main tool to verify (i) and (ii) is Lemma~\ref{areasum-polygon-variation}.
For (i), we suppose that $k\leq m-2$, and seek a contradiction. Possibly replacing $Q_{(m)}$ by $Q_{(m)}^\circ$ (cf. \eqref{type-AB-polar}), we may asssume that $Q_{(m)}$ is of type $A$; namely, 
$$
Q_{(m)}\in \widetilde{\mathcal{P}}^{(m-2)}_A.
$$ 
We choose an $x_{-1}\in(p,x_0)$ very close to $x_0$ in a way such that  such that the intersection $s_0$ of $[o,x_0]$ and  $[x_{-1},x_1]$ lies in $(x_{-1},\mu_0)$ for $\mu_0=(x_{-1}+x_1)/2$. We slightly move $x_0$ parallel to ${\rm aff}\{x_{-1},x_1\}$ into a position $\tilde{x}_0\in{\rm int}\,C_{p,q}$ in a way such that  the intersection $\tilde{s}_0$ of $[o,\tilde{x}_0]$ and  $[x_{-1},x_1]$ lies in $(s,\mu_0)$. We deduce from Lemma~\ref{areasum-polygon-variation} that 
$$
\left|R\right|+\left|R^\circ\right|>\left|Q_{(m)}\right|+\left|Q_{(m)}^\circ\right|
$$
holds for $R=[q,o,p,x_{-1},\tilde{x}_0,x_1,\ldots,x_k]\in\mathcal{K}_{p,q}$. Now, we distinguish two cases:
\begin{itemize}
\item If $k\geq 2$, then we set $x_{k+1}$ to be the reflected image of $x_{-1}$ through $\ell_{p,q}$, and move $x_k$ into the position $\tilde{x}_k\in{\rm int}\,C_{p,q}$ parallel  to ${\rm aff}\{x_{k-1},x_{k+1}\}$ in a way such that $\tilde{x}_k$ is the reflected image of $\tilde{x}_{0}$ through $\ell_{p,q}$. The argument above based on Lemma~\ref{areasum-polygon-variation} proves that
$$
\left|Q'_{(m)}\right|+\left|Q'^\circ_{(m)}\right|>\left|R\right|+\left|R^\circ\right|>\left|Q_{(m)}\right|+\left|Q_{(m)}^\circ\right|
$$
holds for $Q'_{(m)}=[q,o,p,x_{-1},\tilde{x}_0,x_1,\ldots,x_{k-1},\tilde{x}_k,x_{k+1}]\in\widetilde{\mathcal{P}}^{(k+2)}_A$. As $k+2\leq m$, this contradiction verifies (i) in this case.

\item If $k=1$, then we still set $x_{k+1}=x_2$ to be the reflected image of $x_{-1}$ through $\ell_{p,q}$. Possibly choosing $x_{-1}$ and $\tilde{x}_0$ even closer to $x_0$, if we slightly move $x_1$ into the position $\tilde{x}_1\in{\rm int}\,C_{p,q}$ parallel  to ${\rm aff}\{\tilde{x}_{0},x_{2}\}$ in a way such that ${\rm aff}\,\{\tilde{x}_{0},\tilde{x}_1\}$ is parallel to ${\rm aff}\{x_{-1},x_2\}$, then 
$$
\left|\widetilde{Q}_{(m)}\right|+\left|\widetilde{Q}^\circ_{(m)}\right|>\left|R\right|+\left|R^\circ\right|>\left|Q_{(m)}\right|+\left|Q_{(m)}^\circ\right|
$$
holds for $\widetilde{Q}_{(m)}=[q,o,p,x_{-1},\tilde{x}_0,\tilde{x}_1,x_{2}]\in \mathcal{K}_{p,q}$ by Lemma~\ref{areasum-polygon-variation}. Therefore, if $\widetilde{Q}'_{(m)}\in\widetilde{\mathcal{P}}^{(3)}_A$ is the  Steiner symmetral of $\widetilde{Q}_{(m)}$ with respect to $\ell_{p,q}$, then \eqref{Steiner-symm-inC-eq} yields
$$
\left|\widetilde{Q}'_{(m)}\right|+\left|\widetilde{Q}'^\circ_{(m)}\right|\geq \left|\widetilde{Q}_{(m)}\right|+\left|\widetilde{Q}^\circ_{(m)}\right|>\left|Q_{(m)}\right|+\left|Q_{(m)}^\circ\right|,
$$
which contradiction verifies (i) in all cases.

\end{itemize}

Next, we prove (ii) again indirectly; namely, we suppose that there exists $i\in\{1,\ldots,k-1\}$ such that the points $o$, $x_i$ and $\mu_i=(x_{i-1}+x_{i+1})/2$ are not collinear, and seek a contradiction. Here we may assume that $i<k/2$ by the symmetry through $\ell_{p,q}$, and hence $k\geq i+2$.  Let $s_i\neq \mu_i$ be the intersection point of $[o,x_i]$ and  $[x_{i-1},x_{i+1}]$. 
We slightly move $x_i$  parallel to ${\rm aff}\{x_{i-1},x_{i+1}\}$ into a position $\tilde{x}_i\in{\rm int}\,C_{p,q}$ in a way such that  the intersection $\tilde{s}_i$ of $[o,\tilde{x}_i]$ and  $[x_{i-1},x_{i+1}]$ lies in $(s_i,\mu_i)$, and both $x_{i-1}$ and $x_{i+1}$ stay vertices of 
$$
R=[q,o,p,x_{0},\ldots,x_{i-1},\tilde{x}_i,x_{i+1},\ldots,x_k]\in\mathcal{K}_{p,q}.
$$
We deduce from Lemma~\ref{areasum-polygon-variation} that 
$$
\left|R\right|+\left|R^\circ\right|>\left|Q_{(m)}\right|+\left|Q_{(m)}^\circ\right|.
$$
Again, we distinguish two cases:
\begin{itemize}
\item If $k-i>i+1$, then we  move $x_{k-i}$ into the position $\tilde{x}_{k-i}\in{\rm int}\,C_{p,q}$ parallel  to ${\rm aff}\{x_{k-i-1},x_{k-i+1}\}$ in a way such that $\tilde{x}_{k-i}$ is the reflected image of $\tilde{x}_{i}$ through $\ell_{p,q}$. The argument above based on Lemma~\ref{areasum-polygon-variation} proves that
$$
\left|Q'_{(m)}\right|+\left|Q'^\circ_{(m)}\right|>\left|R\right|+\left|R^\circ\right|>\left|Q_{(m)}\right|+\left|Q_{(m)}^\circ\right|
$$
holds for $Q'_{(m)}=[q,o,p,x_{0},\ldots,x_{i-1},\tilde{x}_i,x_{i+1},\ldots,x_{k-i-1},\tilde{x}_{k-i},x_{k-i+1},\ldots,x_k]\in\widetilde{\mathcal{P}}^{(m)}$. This contradiction verifies (ii) in this case.

\item Let $k-i=i+1$. After possibly choosing  $\tilde{x}_i$ even closer to $x_i$, if we move $x_{i+1}$ into the position $\tilde{x}_{i+1}\in{\rm int}\,C_{p,q}$ parallel  to ${\rm aff}\{\tilde{x}_{i},x_{i+2}\}$ in a way such that ${\rm aff}\,\{\tilde{x}_{i},\tilde{x}_{i+1}\}$ is parallel to ${\rm aff}\{x_{i-1},x_{i+2}\}$, and $\tilde{x}_{i}$ and $x_{i+2}$ are vertices of 
$$
\widetilde{Q}_{(m)}=[q,o,p,,x_{0},\ldots,x_{i-1},\tilde{x}_i,\tilde{x}_{i+1},x_{i+2},\ldots,x_k]\in \mathcal{K}_{p,q},
$$
then Lemma~\ref{areasum-polygon-variation} yields that
$$
\left|\widetilde{Q}_{(m)}\right|+\left|\widetilde{Q}^\circ_{(m)}\right|>\left|R\right|+\left|R^\circ\right|>\left|Q_{(m)}\right|+\left|Q_{(m)}^\circ\right|.
$$
 Therefore, if $\widetilde{Q}'_{(m)}\in\widetilde{\mathcal{P}}^{(k)}$ is the  Steiner symmetral of $\widetilde{Q}_{(m)}$ with respect to $\ell_{p,q}$, then \eqref{Steiner-symm-inC-eq} yields
$$
\left|\widetilde{Q}'_{(m)}\right|+\left|\widetilde{Q}'^\circ_{(m)}\right|\geq \left|\widetilde{Q}_{(m)}\right|+\left|\widetilde{Q}^\circ_{(m)}\right|>\left|Q_{(m)}\right|+\left|Q_{(m)}^\circ\right|,
$$
which contradiction verifies (ii) in all cases.

\end{itemize}

Finally, (ii) for $Q_{(m)}^\circ$ implies (iii) for $Q_{(m)}$ by \eqref{point-line-polarity}.\\

\noindent{\bf Step 2.} {\it Intersections with ellipses are candidates for extremality.}

We recall that we have assumed that $p$ and $q$ are orthogonal based on Claim~\ref{pq-orthogonal}, and hence $\|r\|=\sqrt{2}$ and $(p+q)/2=r/2$.

Having proved all the three properties (i), (ii) and (iii) of $Q_{(m)}$ for $m\geq 6$, the symmetric role of $Q_{(m)}$ and $Q_{(m)}^\circ$ allows us to assume that $Q_{(m)}$ is of type B (cf. \eqref{type-AB-polar}), thus $p$ and $q$ are proper vertices of $Q_{(m)}$.
 Setting
$$
\Xi=\max_{M\in \mathcal{K}_{p,q}}\left(|M|+|M^\circ|\right),
$$
we deduce from Claim~\ref{Steiner-symm-inC} that
 \begin{equation} 
\label{optM-symm}
\Xi=\max_{M\in \mathcal{K}_{p,q}\atop {\rm symmetric}\,{\rm through}\,\ell_{p,q}}\left(|M|+|M^\circ|\right).
\end{equation}
Now, any $M\in \mathcal{K}_{p,q}$ symmetric through $\ell_{p,q}$ can be arbitrary well approximated by polygons in $\mathcal{K}_{p,q}$ symmetric through $\ell_{p,q}$.
 Therefore,  the definition of $Q_{(m)}$ yields that
 \begin{equation} 
\label{optM-Qm}
\Xi=\lim_{m\to \infty}\left(|Q_{(m)}|+|Q_{(m)}^\circ|\right).
\end{equation}
It follows from Claim~\ref{C-not-optimal} that there exists $\varepsilon>0$ and points $\xi_0,\xi\in (r,(p+q)/2)$ such that
for any $M\in\mathcal{K}_{p,q}$ symmetric through $\ell_{p,q}$, we have
$$
|M|+|M^\circ|>\Xi-\varepsilon\mbox{ \ implies \ }\xi_0\in M\mbox{ \ and \ }\xi\not\in M.
$$
We deduce from \eqref{optM-Qm} that if $m$ is large, then
\begin{equation}
\label{Qm-nor-degenerate}
\xi_0\in Q_{(m)}\mbox{ \ and \ }\xi\not\in Q_{(m)}.
\end{equation}

We recall that $\sigma=\{tp+sq:s,t\geq 0\}$.
It follows from (ii) and (iii) that for any $m\geq 6$, there exists an $o$-symmetric ellipse $E_{(m)}$ symmetric through $\ell_{p,q}$ such that all proper vertices of $Q_{(m)}$ (including $p$ and $q$) lie on $\partial E_{(m)}$, and they are equally spaced according to the metric determined by $E_{(m)}$. In addition,   the sides of $Q_{(m)}^\circ$ meeting ${\rm int}\,\sigma$ touch $E_{(m)}^*$ in their midpoints.

We deduce from \eqref{Qm-nor-degenerate} that if $m$ is large, then
\begin{equation}
\label{Em-nor-degenerate}
\xi_0\in E_{(m)}\mbox{ \ and \ }r\not\in E_{(m)}.
\end{equation}
For the orthonormal basis $u,v$ of $\R^2$ where $u=r/\sqrt{2}$ and $\langle v,p\rangle=\sqrt{2}/2$, we consider the equation for the boundary $\partial E_{(m)}$. 
According to \eqref{Em-nor-degenerate} and $p,q\in\partial E_{(m)}$, there exist $\sqrt{2}/2<\|\xi_0\|\leq a_m<\sqrt{2}$ and some $b_m>\sqrt{2}/2$ such that $xu+yv\in \partial E_{(m)}$ if and only if
$$
\frac{x^2}{a_m^2}+\frac{y^2}{b_m^2}=1\mbox{ \ where \ }\frac{1}{2a_m^2}+\frac{1}{2b_m^2}=1.
$$
Here $a_m\geq \|\xi_0\| >\sqrt{2}/2$ yield that the sequence $\{b_m\}$ is also bounded; therefore, there exists a subsequence $\{E_{(m')}\}$ that tends to an ellipse $E$ in the Hausdorff metric, and we claim that $E$ satisfies that
\begin{description}
\item{(a)} $E$ is $o$-symmetric,  symmetric through $\ell_{p,q}$ and $p,q\in\partial E$,
\item{(b)} $E\cap\sigma=E\cap C_{p,q}\in \mathcal{K}_{p,q}$,
\item{(c)} $|E\cap C_{p,q}|+\big|(E\cap C_{p,q})^\circ\big|=\Xi$ .
\end{description}
Here (a) follows from the corresponding properties of each $E_{(m)}$. 
Since  the proper vertices of $Q_{(m)}$ lie on $\partial E_{(m)}$, and they are equally spaced according to the metric determined by $E_{(m)}$, the property (i) of $Q_{(m)}$ implies that
\begin{equation}
\label{Em-difference}
\lim_{m\to \infty}\Big|\left(\sigma\cap E_{(m)}\right)\Delta Q_{(m)}\Big|=\lim_{m\to \infty}\left(\big|\sigma\cap E_{(m)}\big|-|Q_{(m)}|\right)=0.
\end{equation}
Using \eqref{limit} and \eqref{Em-difference}, we deduce (b) from $Q_{(m)}\subset C_{p,q}$, and (c) from \eqref{relative-polar-limit} and  \eqref{optM-Qm}. \\

\noindent{\bf Step 3.} {\it Search for the optimal ellipse.}

Since $p$ and $q$ are assumed to be orthogonal by Claim~\ref{pq-orthogonal}, Proposition~\ref{BS-inC} will follow if we prove for the ellipse $E$ in (a), (b) and (c) that
 \begin{equation} 
\label{ellipse-ineq}
\big|C_{p,q}\cap E\big|+\big|(C_{p,q}\cap E)^\circ\big|\leq \frac{\pi}2.
\end{equation}

We use the orthonormal basis $u,v$ of $\R^2$ as in Step~2.
Let $a,b>0$ be the half axes of $E$ where (cf. (a)) $au\in\partial E$ and $bv\in \partial E$, and (a) and (b) imply that 
$\sqrt{2}/2<a\leq 1\leq b$. In particular, $E=\Phi B^2$ for a diagonal matrix $\Phi$
with entries $a$ and $b$ on the diagonal, and 
$$
C_{p,q}\cap E=\Phi\aleph
$$ 
where $\aleph=\{te+sf\in B^2:s,t\geq 0\}$ for $e=u\cos\frac{\beta}2+v\sin\frac{\beta}2$, $f=u\cos\frac{\beta}2-v\sin\frac{\beta}2$ and $\beta\in(0,\pi/2]$, and $\beta$ satisfies
$$
a\cos\frac{\beta}2=b\sin\frac{\beta}2=\frac1{\sqrt{2}}\mbox{ \ and \ }\tan\frac{\beta}2=\frac{a}b\leq 1\mbox{ \ and \ }
e=\frac{u}{a\sqrt{2}}+\frac{v}{b\sqrt{2}}.
$$
It follows from \eqref{point-line-polarity} that
$$
(C_{p,q}\cap E)^\circ=\left[o,p,\Phi^{-t}e\right]\cup \Phi^{-t}\aleph\cup \left[o,q,\Phi^{-t}f\right];
$$ 
therefore,
$$
\big|C_{p,q}\cap E\big|+\big|(C_{p,q}\cap E)^\circ\big|=\frac{ab\beta}2+
\frac{\beta}{2ab}+\frac1{2a^2}-\frac1{2b^2}.
$$
As $\beta\in(0,\frac{\pi}2]$, we can parametrize it by $t=\cos\beta\in[0,1)$, and consider
$$
f(t)=\big|C_{p,q}\cap E\big|+\big|(C_{p,q}\cap E)^\circ\big|=t+\frac{2-t^2}{\sqrt{1-t^2}}\;{\rm arc}\,{\rm tan}\sqrt{\frac{1-t}{1+t}}.
$$
We note that ${\rm arc}\,{\rm tan}\, s<s$ for $s\in(0,1)$. Therefore, if $t\in (0,1)$, then
$$
f'(t)=\frac{-t^2}{2(1-t^2)}+\frac{t^3}{(1-t^2)^{3/2}}
\;{\rm arc}\,{\rm tan}\sqrt{\frac{1-t}{1+t}}<
\frac{-t^2}{2(1-t^2)}\left(1-\frac{2t}{1+t}\right)<0.
$$
We conclude that
$$
\big|C_{p,q}\cap E\big|+\big|(C_{p,q}\cap E)^\circ\big|=f(t)\leq f(0)=\pi/2,
$$
proving \eqref{ellipse-ineq}, and in turn Proposition~\ref{BS-inC}.
\end{proof}

\section{Proof of Theorem~\ref{thm-volume-sum} and Theorem~\ref{BS-stab}}

\begin{proof}[Proof of Theorem~\ref{thm-volume-sum}]
Let $K\subset\R^2$ be a an $o$-symmetric convex body such that
$B^2\subset K$  is the John ellipse of $K$. According to Behrend \cite{Beh37}, p. 734, there exist a $k\in\{4,6\}$ and an $o$-symmetric set $\{u_1,\ldots,u_k\}\subset S^{1}\cap\partial K$ such that $u_1,\ldots,u_k$ are in clockwise order along $S^1$, and setting $u_0=u_k$, we have $\angle(u_{i-1},o,u_i)\leq\frac{\pi}2$ for $i=1,\ldots,k$. This property also follows from 
\eqref{John-cond} and \eqref{John-card}.

As the supporting line to $K$ at any $u_i$ has to be a supporting line of $B^2$, as well, we deduce that
$$
K\cap \sigma_i\subset C_{u_{i-1},u_i}\mbox{ \ and \ }K^*\cap \sigma_i=(K\cap \sigma_i)^{\circ}
$$
where $\sigma_i=\{su_{i-1}+tu_i:s,t\geq 0\}$ for $i=1,\ldots,k$, and $(K\cap \sigma_i)^{\circ}$ is defined as a set in
$\mathcal{K}_{u_{i-1},u_i}$. We conclude from Proposition~\ref{BS-inC} that
$$
|K\cap \sigma_i|+|K^*\cap \sigma_i|\leq \angle(u_{i-1},o,u_i)
$$
for $i=1,\ldots,k$; therefore, $|K|+|K^*|\leq 2\pi$. 

Finally, if $B^2\supset K$ is the L\"owner ellipse, then the proof is the same word by word.
\end{proof}
 
\begin{proof}[Proof of Theorem~\ref{BS-stab}]
Let $K\subset \R^2$ be an $o$-symmetric convex body satisfying $|K|\cdot |K^*|\geq (1-\varepsilon)\pi^2$ for $\varepsilon\in(0,\frac12)$.

First, we consider the case of  Theorem~\ref{BS-stab} concerning the John ellipse is $E_J$ of $K$ where we may asssume that $E_J=B^2$,  and hence $K^*\subset B^2\subset K$. It follows from Behrend's inequality \eqref{Behrend} that $|K|\leq 4$, thus $\sqrt{|K|}+\sqrt{\pi}<4$.
We deduce from $\sqrt{1-\varepsilon}>1-\varepsilon$, the AM-GM inequality and Theorem~\ref{thm-volume-sum}  that
\begin{equation}
\label{prod-sum-est}
(1-\varepsilon)2\pi< 2\sqrt{|K|\cdot |K^*|}\leq |K|+ |K^*|\leq 2\pi,
\end{equation} 
and hence using the formula $|K|+ |K^*|-2\sqrt{|K|\cdot |K^*|}=\left(\sqrt{|K|}-\sqrt{|K^*|}\right)^2$, we have
$$
2\pi\varepsilon >\left(\sqrt{|K|}-\sqrt{|K^*|}\right)^2\geq \left(\sqrt{|K|}-\sqrt{\pi}\right)^2=
\frac{\left(|K|-\pi\right)^2}{\left(\sqrt{|K|}+\sqrt{\pi}\right)^2}\geq \frac{\left(|K|-\pi\right)^2}{16};
$$
therefore, $|K|\geq \pi$ yields that
$$
|K\backslash B^2|\leq \sqrt{32\pi} \cdot\sqrt{\varepsilon}<4|K| \sqrt{\varepsilon}.
$$

Next, we consider the L\"owner ellipse $E_L$ of $K$ where we may asssume that $E_L=B^2$, and hence $K\subset B^2\subset K^*$.
 As \eqref{prod-sum-est} holds again, we have
$$
2\pi\varepsilon >\left(\sqrt{|K^*|}-\sqrt{|K|}\right)^2\geq \left(\sqrt{\pi}-\sqrt{|K|}\right)^2=
\frac{\left(\pi-|K|\right)^2}{\left(\sqrt{|K|}+\sqrt{\pi}\right)^2}\geq \frac{\left(\pi-|K|\right)^2}{4\pi};
$$
therefore, as Behrend's inequality \eqref{Behrend} yields that $|K|\geq 2$, we have
$$
|B^2\backslash K|\leq \sqrt{8\pi^2} \cdot\sqrt{\varepsilon}< 5|K| \sqrt{\varepsilon},
$$
completing the proof of Theorem~\ref{BS-stab}.
\end{proof}

\noindent{\bf Acknowledgement:} B\"or\"oczky's research is supported in part by NKKP grant 150613. Makai, Jr.'s research was partially supported by
several OTKA grants, and was supported by ERC Advanced Grant "Geoscape"
No. 882971.

\noindent K\'aroly J. B\"or\"oczky, HUN-REN Alfr\'ed R\'enyi Institute of Mathematics, Budapest, Hungary\\
boroczky.karoly.j@renyi.hu and
 ELTE,  Institute of Mathematics, Budapest, Hungary\\

\noindent Endre Makai, Jr., HUN-REN Alfr\'ed R\'enyi Institute of Mathematics, Budapest, Hungary\\
makai.endre@renyi.hu\\

\end{document}